\newcommand{\defn}[1]{\emph{#1}}
\newcommand{\respectively}[1]{\text{\qquad(respectively,\quad}{#1}\text{\,)}}
\newcommand{\andeq}{\text{\qquad and\qquad}}
\newcommand{\period}{\rlap{\ .}}
\newcommand{\comma}{\rlap{\ ,}}
\newcommand{\Cech}{\check{C}}
\newcommand{\ilowerstar}{i_{\ast}}
\newcommand{\iupperstar}{i^{\ast}}
\newcommand{\flowerstar}{f_{\ast}}
\newcommand{\fupperstar}{f^{\ast}}
\newcommand{\xlowerstar}{x_{\ast}}
\newcommand{\Gammalowerstar}{\Gamma_{\ast}}
\newcommand{\Gammaupperstar}{\Gamma^{\ast}}
\newcommand{\Gammauppershriek}{\Gamma^{!}}
\newcommand{\iuppershriek}{i^{!}}
\DeclareMathOperator{\Env}{Env}
\DeclareMathOperator{\rank}{rk}
\newcommand{\fp}{\,\ensuremath{\textit{fp}}}
\renewcommand{\fin}{\,\ensuremath{\textit{fin}}}
\newcommand{\acc}{\ensuremath{\textit{acc}}}
\newcommand{\constr}{\ensuremath{\textit{constr}}}
\newcommand{\lc}{\ensuremath{\textit{lc}}}
\newcommand{\proet}{\ensuremath{\textit{proét}}}
\newcommand{\perf}{\ensuremath{\textit{perf}}}
\newcommand{\hyp}{\ensuremath{\textit{hyp}}}
\newcommand{\spec}{\ensuremath{\textit{spec}}}
\newcommand{\disc}{\ensuremath{\textit{disc}}}
\newcommand{\indisc}{\ensuremath{\textit{indisc}}}
\newcommand{\coh}{\ensuremath{\textit{coh}}}
\newcommand{\bc}{\ensuremath{\textit{bc}}}
\newcommand{\cts}{\ensuremath{\textit{cts}}}
\newcommand{\pt}{\ast}
\newcommand{\stable}{\ensuremath{\textit{st}}}
\newcommand{\lult}{\ensuremath{\textit{LUlt}}}
\newcommand{\cc}{\ensuremath{\textit{cc}}}
\newcommand{\pyk}{\ensuremath{\textit{pyk}}}
\newcommand{\cross}{\times}
\newcommand{\tensor}{\otimes}
\newcommand{\isomorphic}{\cong}
\newcommand{\equivalent}{\simeq}
\newcommand{\paren}[1]{\left(#1\right)}
\newcommand{\Setfin}{\Set^{\fin}}
\newcommand{\Bool}{\categ{Bool}}
\newcommand{\Comp}{\categ{Comp}}
\newcommand{\Pyk}{\categ{Pyk}}
\newcommand{\TSpc}{\categ{TSpc}}
\newcommand{\Sh}{\categ{Sh}}
\newcommand{\Sheff}{\Sh_{\eff}}
\newcommand{\Topbc}{\Top_{\infty}^{\bc}}
\newcommand{\Spacefin}{\Space_{\pi}}
\newcommand{\Strat}{\categ{Str}}
\newcommand{\CG}{\categ{CG}}
\newcommand{\Boolcomp}{\categ{Bool}^{\wedge}}
\newcommand{\Stn}{\categ{Stn}}
\newcommand{\EStn}{\categ{EStn}}
\newcommand{\Stonean}{\categ{EStn}}
\newcommand{\CS}{\categ{CS}}
\newcommand{\Catfp}{\Cat^{\fp}}
\newcommand{\LCA}{\categ{LCA}}
\newcommand{\USet}{\categ{USet}}
\newcommand{\Ult}{\categ{Ult}}
\newcommand{\UltL}{\Ult^{L}}
\newcommand{\PykSp}{\Pyk^{\Sp}}
\newcommand{\Unif}{\categ{Unif}}
\newcommand{\PsiPyk}{{\upPsi\Pyk}}
\DeclareMathOperator{\Gal}{Gal}
\DeclareMathOperator{\mat}{mat}
\DeclareMathOperator{\Pro}{Pro}
\DeclareMathOperator{\rk}{rk}
\DeclareMathOperator{\Tot}{Tot}
\DeclareMathOperator{\Pt}{Pt}
\newcommand{\MOR}{\ensuremath{\textup{\textsc{Mor}}}}
\newcommand{\Funcross}{\Fun^{\cross}}
\newcommand{\Funlult}{\Fun^{\lult}}
\newcommand{\twarr}{\widetilde{O}}
\newcommand{\XXcoh}{\XX^{\coh}}
\newcommand{\XXcohbdd}{\XXcoh_{<\infty}}
\newcommand{\enumref}[2]{(\ref{#1}.\ref{#1.#2})} 
\numberwithin{equation}{subsection}
\newcommandx{\PHtodo}[2][1=]{\todo[linecolor=blue,backgroundcolor=blue!25,bordercolor=blue,#1]{#2}}
\title{Pyknotic objects, I. Basic notions}
\author{Clark Barwick \and Peter Haine}
\date{}
\begin{document}

\maketitle

\begin{abstract} 
	Pyknotic objects are (hyper)sheaves on the site of compacta.
	These provide a convenient way to do algebra and homotopy theory with additional topological information present.
	This appears, for example, when trying to contemplate the derived category of a local field.
	In this article, we present the basic theory of pyknotic objects, with a view to describing a simple set of everyday examples. 
\end{abstract}

\setcounter{tocdepth}{2}
\tableofcontents

\setcounter{section}{-1}


\section{Introduction}


\subsection{The proétale topology and pyknotic objects}

Let $E$ be a local field, and let $X$ be a connected, topologically noetherian, coherent scheme.
Bhargav Bhatt and Peter Scholze \cite[Lemma 7.4.7]{BhattScholzeProEtale} construct a topological group \smash{$\pi_1^{\,\proet}(X)$} that classifies local systems of $E$-vector spaces in the sense that there is a monodromy equivalence of categories between the continuous $E$-linear representations and $E$-linear local systems.
The group \smash{$\pi_1^{\,\proet}(X)$} isn't profinite or even a proöbject in discrete groups in general: Deligne's example of a curve of genus $\geq 1$ with two points identified has local systems that are not classified by any such group.

In forthcoming work \cite{exodromy}, we will extend the Bhatt--Scholze monodromy equivalence to an \emph{exodromy equivalence} between continuous $E$-representations of the Galois category $\Gal(X)$ and constructible sheaves of $E$-vector spaces.
To speak of such continuous representations, one needs to contemplate not only the category of finite dimensional $E$-vector spaces but also the natural topology thereupon.

To describe constructible sheaves of \emph{complexes} of $E$-vector spaces, we need a new idea in order to speak of an $\infty$-category of perfect complexes of $E$-vector spaces in a manner that retains the natural topological information coming from $E$.

In this paper, we describe a way to do this: a \emph{pyknotic\footnote{\textit{Pykno} comes from the Greek \textit{\textgreek{πυκνός}} meaning `dense', `compact', or `thick'.} object of an $\infty$-category $ C $} is a (hyper)sheaf on the site of compact hausdorff spaces valued in $ C $.
We may thus speak of pyknotic sets, pyknotic groups, pyknotic rings, pyknotic spaces, pyknotic $\infty$-categories, \textit{\& c.}
Pyknotic structures function in much the same way as topological structures.

At the same time, pyknotic sets are the proétale sheaves of sets on a separably closed field, and the proétale topos of any coherent scheme has the natural structure of a pyknotic category.
There is a deep connection between the passage from objects to pyknotic objects and the passage from the étale topology to the proétale topology.

Our local field $E$ is naturally a pyknotic ring; pyknotic vector spaces over $E$ comprise a pyknotic category; complexes of pyknotic vector spaces over $E$ comprise a pyknotic $\infty$-category $\DD(E)$; and perfect complexes of pyknotic vector spaces over $E$ comprise a pyknotic subcategory $\DD^{\perf}(E)$.
Our exodromy equivalence will then be a natural equivalence
\[
	\Fun^{\Pyk}(\Gal(X), \DD^{\perf}(E)) \simeq \DD^{\constr}_{\proet}(X; E) \period
\]

Moreover, the proétale $\infty$-topos $X_{\proet}$ itself is naturally a pyknotic category, and one can identify it with the category of pyknotic functors from $\Gal(X)$ to pyknotic spaces:
\[
	X_{\proet} \simeq \Fun^{\Pyk}(\Gal(X), \Pyk(\Space)) \period
\]


\subsection{The aims of this paper}

This paper is the first of a series.
Our objective here is only to establish the very basic formalism of pyknotic structures, in the interest of developing a few key examples.

\begin{exm}
	For any set, group, abelian group, ring, space, spectrum, category, \textit{\& c.}, $A$, there are both a discrete pyknotic object $A^{\disc}$ and an indiscrete pyknotic object $A^{\indisc}$ attached to $A$ (\Cref{cnstr:discindisc}).
	As with topological structures, these notions are set up so that a map out of a discrete object is determined by a map at the level of the underlying object, and a map into an indiscrete object is determined by a map at the level of the underlying object.
\end{exm}

\begin{exm}
	Starting with discrete objects, one can develop more interesting pyknotic structures by the formation of inverse limits.
	Thus profinite groups like Galois groups and étale fundamental groups are naturally pyknotic, and profinite categories like $\Gal(X)$ above are naturally pyknotic (\Cref{exm:exodromyaspyknotic}).
	These inverse limits are no longer discrete.
\end{exm}

\begin{exm}
	More generally still, compactly generated topological spaces embed fully faithfully into pyknotic sets, in a manner that preserves limits (\Cref{exm:CGembeds}).
	Thus locally compact abelian groups, normed rings, and complete locally convex topological vector spaces are all naturally pyknotic objects.
	This includes the vast majority of topological objects that appear in number theory and functional analysis.

	One key point, however, is that the relationship between compactly generated topological spaces and pyknotic sets is \emph{dual} to the relationship between compactly generated topological spaces and general topological spaces:
	in topological spaces, compactly generated topological spaces are stable under colimits but not limits;
	in pyknotic sets, compactly generated topological spaces are stable under limits but not colimits.

	Furthermore, since pyknotic sets form a $1$-topos, it follows readily that products of quotients are again quotients (\Cref{exm:quotientsarequotients}).
	This is of course not true in the realm of topological spaces, and this is one of the main reasons that topologising fundamental groups is such a fraught endeavour.
\end{exm}

\begin{exm}
	More exotically, the cokernel $\widehat{\ZZ}/\ZZ$ in pyknotic groups is not indiscrete. 
	This is in contrast with the topological case.

	Even more dramatically, if $A$ is a locally compact abelian group, the continuous homomorphism $i \colon A^{\disc} \to A$, when viewed as a pyknotic homomorphism, is a monomorphism with a nontrivial cokernel.
	The underlying abelian group of this cokernel, however, is trivial.
	This underscores one of the main peculiarities of the theory of pyknotic structures, which is also one of its advantages: the forgetful functor is not faithful.
\end{exm}

\begin{exm}
	Pyknotic spaces and spectra form well-behaved categories, and their homotopy groups are naturally pyknotic.
	This makes it sensible to speak of topologies on the homotopy groups of spaces and spectra.
	For example, the $E$-nilpotent completion of a spectrum is naturally a pyknotic spectrum (\Cref{exm:Enilpotentcomp}), and its homotopy pyknotic groups are computed by the $E$-based Adams--Novikov spectral sequence.
\end{exm}

\begin{exm}
	The category of pyknotic objects of a presentable category $C$ form a natural example of a \defn{pyknotic category}:
	the category of sections over any compactum $K$ is itself the category of sheaves in $C$ on the site of compacta over $K$.
	Pyknotic categories provide a context in which one can do homotopy theory while keeping control of `topological' structures.

	For example, for a local field $E$, one may speak of the \defn{pyknotic derived category} $\DD_{\Pyk}(E)$, whose objects can be thought of as complexes of pyknotic vector spaces over $E$.
	This construction will be the focus of our attention in a sequel to this paper.
\end{exm}


\subsection{Pyknotic and condensed}

As we were developing these ideas, we learned that Dustin Clausen and Peter Scholze have independently been studying essentially the same notion, which they call \emph{condensed objects}.\footnote{In fact, as we were preparing this first manuscript, Scholze's ongoing lecture notes \cite{Scholze:condensednotes} appeared and Scholze gave a talk at MSRI on this material \cite{Scholze:condensedtalknotes,Scholze:Condensedtalk}.}

There is, however, a difference between pyknotic objects and the condensed objects of Clausen and Scholze: it is a matter of set theory.
To explain this, select a strongly inaccessible cardinal $\delta$ and the smallest strongly inaccessible cardinal $\delta^+$ over $\delta$.
A pyknotic set in the universe $\VV_{\delta^+}$ is a sheaf on the site $\Comp_{\delta}$ of $\delta$-small compacta, valued in the category $\Set_{\delta^+}$ of $\delta^+$-small sets.
By contrast, a condensed set in the universe $\VV_{\delta}$ is a sheaf on $\Comp_{\delta}$ valued in $\Set_{\delta}$ that is in addition $\kappa$-accessible for some regular cardinal $\kappa<\delta$.
Thus condensed sets in $\VV_{\delta}$ embed fully faithfully into pyknotic sets in $\VV_{\delta^+}$, which in turn embed fully faithfully into condensed sets in $\VV_{\delta^+}$.
(We shall discuss this accessibility more precisely in \cref{app:A}.)

The Clausen--Scholze theory of condensed objects can thus be formalised completely in \textsc{zfc},
whereas our theory of pyknotic objects requires at least one strongly inaccessible cardinal.

As emphasised by Scholze, however, the distinction between pyknotic and condensed does have some consequences beyond philosophical matters.
For example, the indiscrete topological space $\{0,1\}$, viewed as a sheaf on the site of compacta, is pyknotic but not condensed (relative to any universe).
By allowing the presence of such pathological objects into the category of pyknotic sets, we guarantee that it is a topos, which is not true for the category of condensed sets.

It would be too glib to assert that the pyknotic approach values the niceness of the category over the niceness of its objects, while the condensed approach does the opposite.
However, it seems that the pyknotic objects that one will encounter in serious applications will usually be condensed, and the majority of the good properties of the category of condensed objects will usually be inherited from the category of pyknotic objects.


\subsection{Acknowledgements}

There is certainly overlap in our work here with that of Clausen and Scholze, even though our aims are somewhat different.
We emphasise that Clausen in particular had understood the significance of condensed objects for many years before we even started to contemplate them.
We thank both Clausen and Scholze for the insights (and corrections) they have generously shared with us via e-mail.

Even outside these private communications, our intellectual debt to them is, we hope, obvious.

We are also grateful to Jacob Lurie, who explained to us many ideas related to ultracategories, and in particular outlined for us the $\infty$-ultracategory material that will eventually be added to \cite{Kerodon}.


\section{Conventions}


\subsection{Higher categories}

\begin{nul}\label{nul:conventionshighercats}
	We use the language and tools of higher category theory, particularly in the model of \emph{quasicategories}, as defined by Michael Boardman and Rainer Vogt and developed by André Joyal and Jacob Lurie.
	We will generally follow the terminological and notational conventions of Lurie's trilogy \cites{HTT,HA,SAG}, but we will simplify matters by \emph{systematically using words to mean their good homotopical counterparts.}\footnote{We have grown weary of the practise of prefixing words with sequences of unsearchable crackjaw symbols.}
	\begin{itemize}
		\item
			The word \emph{category} here will always mean \emph{$\infty$-category} or \emph{$(\infty,1)$-category} or \emph{quasicategory} -- i.e., a simplicial set satisfying the weak Kan condition.
		\item
			A \emph{subcategory} $C'$ of a category $ C $ is a simplicial subset that is stable under composition in the strong sense, so that if $\sigma \colon \Delta^n \to C$ is an $n$-simplex of $ C $, then $\sigma$ factors through $C'\subseteq C$ if and only if each of the edges $\sigma(\Delta^{\{i,i+1\}})$ does so.
		\item
			We will use the terms \emph{groupoid} or \emph{space} interchangeably for what is often called an \emph{$\infty$-groupoid} -- i.e., a category in which every morphism is invertible.

		\item For a category $ C $, we write $ \Pro(C) $ for the category of \textit{proöbjects} in $ C $.
	\end{itemize}
\end{nul}


\subsection{Set theoretic conventions}

\begin{nul}
	Recall that if $\delta$ is a strongly inaccessible cardinal (which we always assume to be uncountable), then the set $\VV_{\delta}$ of all sets of rank strictly less than $\delta$ is a Grothendieck universe \cite[Exposé I, Appendix]{MR50:7130} of rank and cardinality $\delta$.
	Conversely, if $\VV$ is a Grothendieck universe that contains an infinite cardinal, then $\VV=\VV_{\delta}$ for some inaccessible cardinal $\delta$.

	In order to deal precisely and simply with set-theoretic problems arising from some of the `large' operations, we append to \textsc{zfc} the Axiom of Universes (\textsc{au}).
	This asserts that any cardinal is dominated by a strongly inaccessible cardinal.

	We write $\delta_0$ for the smallest strongly inaccessible cardinal.
	Now \textsc{au} implies the existence of a hierarchy of strongly inaccessible cardinals
	\[
		\delta_0 < \delta_1 < \delta_2 < \cdots \comma
	\]
	in which for each ordinal $\alpha$, the cardinal $\delta_{\alpha}$ is the smallest strongly inaccessible cardinal $\delta_{\alpha}$ that dominates $\delta_{\beta}$ for any $\beta<\alpha$.\footnote{Thus $\VV_{\delta_{\alpha}}$ models \textsc{zfc} plus the axiom `the set of strongly inaccessible cardinals is order-isomorphic to $\alpha$'.}

	We certainly will not use the full strength of \textsc{au}.
	At the cost of some awkward circumlocutions, one could even get away with \textsc{zfc} alone.
\end{nul}

\begin{dfn}
	Let $\delta$ be a strongly inaccessible cardinal.
	A set, group, simplicial set, category, ring, \textit{\& c.}, will be said to be \defn{$\delta$-small}\footnote{The adverb `essentially' is often deployed in this situation.} if it is equivalent (in whatever appropriate sense) to one that lies in $\VV_{\delta}$.
	We write
	\[
		\left.
		\begin{aligned}
    		\text{\defn{tiny}} & \\
    		\text{\defn{small}} &
  		\end{aligned}
  		\right\}
		\text{\ as shorthand for\ }
		\left\{
		\begin{aligned}
    		& \text{\defn{$\delta_0$-small}} \\
    		& \text{\defn{$\delta_1$-small.}}
  		\end{aligned}
  		\right.
	\]

	A category $ C $ is said to be \defn{locally $\delta$-small} if and only if, for any objects $x,y\in C$, the mapping space $\Map_C(x,y)$ is $\delta$-small.
	We write
	\[
		\left.
		\begin{aligned}
    		\text{\defn{locally tiny}} & \\
    		\text{\defn{locally small}} &
  		\end{aligned}
  		\right\}
		\text{\ as shorthand for\ }
		\left\{
		\begin{aligned}
    		& \text{\defn{locally $\delta_0$-small}} \\
    		& \text{\defn{locally $\delta_1$-small.}}
  		\end{aligned}
  		\right.
	\]
\end{dfn}

\begin{nul}
	For a strongly inaccessible cardinal $\delta$, we shall write $\Space_{\delta}$ for the category of $\delta$-small spaces and $\Cat_{\delta}$ for the category of $\delta$-small categories.
	The categories $\Space_{\delta_{\alpha}}$ and $\Cat_{\delta_{\alpha}}$ for the are $\delta_{\alpha+1}$-small and locally $\delta_{\alpha}$-small.
	We write
	\[
		\left.
		\begin{aligned}
    		\Space & \\
    		\Cat &
  		\end{aligned}
  		\right\}
		\text{\ as shorthand for\ }
		\left\{
		\begin{aligned}
    		& \Space_{\delta_1} \\
    		& \Cat_{\delta_1} \period
  		\end{aligned}
  		\right.
	\]
\end{nul}

\begin{nul}
	In the same vein, if $\delta$ is a strongly inaccessible cardinal, \emph{$\delta$-accessibility} of categories and functors and \emph{$\delta$-presentability} of categories will refer to accessibility and presentability with respect to some $\delta$-small cardinal.
	Please observe that a $\delta_{\alpha}$-accessible category is always $\delta_{\alpha+1}$-small and locally $\delta_{\alpha}$-small.
	We shall write
	\[
		\Pr_{\delta_{\alpha}}^L \subset \Cat_{\delta_{\alpha+1}} \respectively{\Pr_{\delta_{\alpha}}^R \subset \Cat_{\delta_{\alpha+1}}}
	\]
	for the subcategory whose objects are presentable categories and whose functors are left (resp., right) adjoints.
	We write
	\[
		\left.
		\begin{aligned}
    		\text{\defn{accessible}} & \\
    		\text{\defn{presentable}} & \\
    		\Pr^L & \\
    		\Pr^R &
  		\end{aligned}
  		\right\}
		\text{\ as shorthand for\ }
		\left\{
		\begin{aligned}
    		& \text{\defn{$\delta_1$-accessible}} \\
    		& \text{\defn{$\delta_1$-presentable}} \\
    		& \Pr_{\delta_1}^L \\
    		& \Pr_{\delta_1}^R
  		\end{aligned}
  		\right.
	\]

	Accordingly, a \defn{$\delta$-topos} is a left exact accessible localisation of a functor category $\Fun(C, \Space_{\delta})$ for some $\delta$-small category $ C $.
	We write \defn{topos} as a shorthand for \defn{$\delta_1$-topos}.
\end{nul}






\subsection{Sites and sheaves}

\begin{dfn}\label{dfn:site}
	A \defn{site} $(C,\tau)$ consists of a category $ C $ equipped with a Grothendieck topology $\tau$.
\end{dfn}

\begin{ntn}
	Let $\delta$ be a strongly inaccessible cardinal.
	We write
	\[
		\Sh_{\tau}(C)_{\delta} \subseteq \Fun(C^{\op},\Space_{\delta})
	\]
	for the full subcategory spanned by the sheaves on $ C $ with respect to the topology $\tau$.
	We write \smash{$ \Sh_{\tau}^{\hyp}(C)_{\delta} \subset \Sh_{\tau}(C)_{\delta} $} for the full subcategory spanned by the \textit{hypercomplete} sheaves.\footnote{For background on hypercompletness, see \cite[\HTTsec{6.5}]{HTT}.}
	In particular, we write $ \Sh_{\tau}(C) $ and \smash{$ \Sh_{\tau}^{\hyp}(C) $} as a shorthand for $ \Sh_{\tau}(C)_{\delta_1}$ and \smash{$ \Sh_{\tau}^{\hyp}(C)_{\delta_1} $}, respectively.
\end{ntn}

\begin{wrn}
	Let $(C,\tau)$ be a site.
	Assume that for some object $X\in C$, there does \textit{not} exist a tiny set of covering sieves of $X$ that is cofinal among all covering sieves.\footnote{So, in particular, $ C $ itself is not tiny.}
	Then the sheafification of a tiny presheaf on $ C $ (i.e., a presheaf $C^{\op} \to \Space_{\delta_0}$) might no longer be tiny.
	The point is that sheafification will involve a colimit over all covering sieves.
	As a consequence, the category $\Sh_{\tau}(C)_{\delta_0}$ of tiny sheaves on $ C $ is not $\delta_0$-topos.
	This is a perennial bugbear, for example, with the fpqc topology on the category of affine schemes.
	The sites $(C,\tau)$ with which we will be working suffer from this as well.

	Some authors simply elect never to sheafify a presheaf with respect to such topologies.
	However, in this article, we will be unable to avoid sheafification, and we do not wish to pass artificially to a subcategory of $ C $, so we will permit ourselves the luxury of `universe hopping':
	in our cases of interest, $ C $ will be small (but not tiny!), and so $\Sh_{\tau}(C)$ is a left exact localisation of $\Fun(C^{\op},\Space_{\delta_1})$ and thus a $\delta_1$-topos.

	In \cref{app:A}, we outline a proof that when the site is suitably accessible, then the sheafification of the small sheaves that arise in practise are again small.
	This is an adaptation of the strategy developed by Waterhouse \cite{MR0396578}.
	This gives a slightly more conservative way to deal with this issue.
\end{wrn}

\begin{dfn}\label{dfn:finitaryinftysite}
	A site $(C,\tau)$ is said to be \defn{finitary} if and only if $ C $ admits all finite limits, and, for every object $X\in C$ and every covering sieve $R \subseteq C_{/X}$, there is a finite subset $\{Y_i\}_{i\in I} \subseteq R$ that generates a covering sieve.
\end{dfn}

\begin{dfn}\label{dfn:presite}
	A \defn{presite} is a pair $(C,E)$ consisting of a category $ C $ along with a subcategory $E\subseteq C$ satisfying the following conditions.
	\begin{itemize}
		\item The subcategory $E$ contains all equivalences of $ C $.

		\item The category $ C $ admits finite limits, and $E$ is stable under base change.

		\item The category $ C $ admits finite coproducts, which are universal, and $E$ is closed under finite coproducts.
	\end{itemize} 
\end{dfn}

\begin{cnstr}\label{cnstr:presitefinitary}
	If $(C,E)$ is a presite, then there exists a topology $\tau_E$ on $ C $ in which the $\tau_E$-covering sieves are generated by finite families $\{V_i \to U\}_{i \in I}$ such that $\coprod_{i\in I}V_i \to U$ lies in $E$ \SAG{Proposition}{A.3.2.1}.
	The site $(C,\tau_E)$ is finitary.
	We simplify notation and write $\Sh_E(C) \subseteq \Fun(C^{\op},\Space_{\delta_1})$ for the full subcategory spanned by the small $\tau_E$-sheaves.
	Note that $\Sh_E(C)$ is a topos if $ C $ is small.

	If in addition the coproducts in $ C $ are disjoint, then a sheaf for $\tau_E$ valued in a category $D$ with all limits is a functor $F\colon C^{\op} \to D$ that carries finite coproducts in $ C $ to finite products in $D$, and for any morphism $ V \to U$ of $E$, the Čech nerve $ \Cech_{\ast}(V/U) \colon \mbfDelta_+^{\op} \to C $ induces an equivalence
	\[
		X(U) \equivalence \lim_{n \in \mbfDelta} X(\Cech_n(V/U)) 
	\]
	\SAG{Proposition}{A.3.3.1}.
	In this case, the topology $ \tau_E $ is subcanonical.
\end{cnstr}

\subsection{Accessible sheaves}\label{app:A}

Let $(C,\tau)$ be a site.
Assume that for some object $X\in C$, there does \textit{not} exist a tiny set of covering sieves of $X$ that is cofinal among all covering sieves.
Then the sheafification of a tiny presheaf on $ C $ (i.e., a presheaf $C^{\op} \to \Space_{\delta_0}$) might no longer be tiny.
The point is that sheafification will involve a colimit over all covering sieves.
As a consequence, the category $\Sh_{\tau}(C)_{\delta_0} \subset \Fun(C^{\op},\Space_{\delta_0})$ of tiny sheaves on $ C $ is not topos.
This becomes a concern, for example, for the fpqc site.
Here, we explain how one may identify conditions on a site that will allow us to sheafify \textit{accessible} presheaves without being forced to pass to a larger universe.
These conditions are satisfied by the fpqc site.
For the fpqc topology on discrete rings, this was observed by Waterhouse \cite{MR0396578}; our formulation only needs a small amount of extra care.

\begin{dfn}
	Let $\beta$ be a tiny regular cardinal. A presite $(C,E)$ is said to be \defn{$\beta$-accessible} if and only if the following conditions hold.
	\begin{itemize}
		\item
			Coproducts in $ C $ are disjoint.
		\item
			The opposite $C^{\op}$ is $\beta$-accessible.
			We write $C_{\beta} \subseteq C$ for the tiny category of $\beta$-cocompact objects (i.e., objects that are $\beta$-compact as objects of $C^{\op}$).
		\item
			Every morphism $X' \to X$ of $E$ can be exhibited as a limit of a diagram $\Lambda^{\op} \to \Fun(\Delta^1,E \cap C_{\beta})$ in which $\Lambda$ is $\beta$-filtered.
	\end{itemize}

	We say that a small presite $(C,E)$ is \defn{accessible} if and only if $ (C,E) $ is $\beta$-accessible for some tiny regular cardinal $\beta$.
\end{dfn} 

\begin{nul}
	Let $\beta$ be a tiny regular cardinal, and let $(C,E)$ be a $\beta$-accessible presite.
	Write $E_{\beta} \coloneq E \cap C_{\beta}$; then $(C_{\beta},E_{\beta})$ is a tiny presite (in which coproducts are still disjoint).
	Consequently, $\Sh_{E_{\beta}}(C_{\beta})_{\delta_0}$ is a $\delta_0$-topos.
\end{nul}

\begin{prp}
	Let $\beta$ be a tiny regular uncountable cardinal, and let $(C,E)$ be a $\beta$-accessible presite.
	Let $f \colon C_{\beta}^{\op} \to \Space_{\delta_0}$ be a functor, and let $F\colon C^{\op}\to\Space_{\delta_0}$ be the left Kan extension of $ f $.
	Then $f$ is a $\tau_{E_{\beta}}$-sheaf if and only if $F$ is a $\tau_E$-sheaf.
\end{prp}

\begin{proof}
	Since every object of $C^{\op}$ is a $\beta$-filtered colimit of objects of $C_{\beta}^{\op}$, it follows that $f$ preserves finite products if and only if $F$ does.

	If $F$ is a sheaf, then the description above ensures that $f$ is a sheaf as well.

	Let $ e \colon V \to U$ be a morphism of $ E $, and let $ \Cech_{\ast}(e) \colon \mbfDelta_+^{\op} \to C $ denote the Čech nerve of $ e $.
	Exhibit $ e $ as a limit
	\[
		\lim_{\alpha\in\Lambda^{\op}} V_{\alpha} \to \lim_{\alpha\in\Lambda^{\op}} U_{\alpha} \comma
	\]
	where $ \Lambda $ is $\alpha$-filtered, and each \smash{$ e_{\alpha} \colon \fromto{V_\alpha}{U_{\alpha}} $} lies in $ E_{\beta} $; in particular each object $ \Cech_n(e_{\alpha}) $ is $\beta$-cocompact.
	Then \smash{$ \Cech_n(e) \simeq \lim_{\alpha\in\Lambda^{\op}} \Cech_{n}(e_{\alpha}) $}, and the map \smash{$X(U)\to\lim_{n\in\mbfDelta} X (\Cech_n(e))$} can be exhibited as the colimit
	\[
		\colim_{\alpha\in\Lambda} X(U_{\alpha}) \to \lim_{n\in\mbfDelta}\colim_{\alpha\in\Lambda} X(\Cech_n(e_{\alpha})) \period
	\]
	Since $\Lambda$ is $\beta$-filtered and $ \beta $ is uncountable, the colimit commutes with the limit, and so the map $X(U)\to\lim_{n\in\mbfDelta}X(V_n)$ is the colimit of a diagram of equivalences
	\[
		X(U_{\alpha}) \equivalence \lim_{n\in\mbfDelta} X(\Cech_n(e_{\alpha})) \comma
	\]
	hence an equivalence.
\end{proof}

\begin{nul}
	Let $(C,E)$ be an $\omega$-accessible presite.
	If $N$ is a natural number, then a functor $f\colon C_{\beta}^{\op}\to\tau_{\leq N}\Space_{\delta_0}$ is a $\tau_{E_{\beta}}$-sheaf if and only if its left Kan extension
	\begin{equation*}
		F\colon C^{\op}\to\tau_{\leq N}\Space_{\delta_0}
	\end{equation*}
	is a $\tau_E$-sheaf.
	The truncatedness assumption ensures that the limit over $\mbfDelta$ can be replaced with a limit over the full subcategory \smash{$ \mbfDelta_{\leq N+1} $} of totally ordered finite sets of cardinality at most $ N+2 $, which is finite. 
	This permits us to commute the filtered colimit past the totalisation.
\end{nul}

\begin{cor}
	Let $\beta$ be a tiny, regular, uncountable cardinal, and let $(C,E)$ be a $\beta$-accessible presite.
	The left Kan extension defines an equivalence of categories between the topos $\Sh_{\tau_{E_{\beta}}}(C_{\beta})_{\delta_0}$ and the full subcategory of $\Sh_{\tau_E}(C)_{\delta_0}$ spanned by the $\beta$-accessible sheaves.
\end{cor}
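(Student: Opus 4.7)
The plan is to upgrade the preceding Proposition, which concerns individual sheaves, to an equivalence of categories by exhibiting left Kan extension as a fully faithful left adjoint whose essential image is exactly the $\beta$-accessible sheaves. Let $\iota \colon C_{\beta}^{\op} \hookrightarrow C^{\op}$ denote the inclusion. On presheaves, $\operatorname{Lan}_{\iota}$ is left adjoint to restriction along $\iota$; by the preceding Proposition, both functors preserve the respective sheaf conditions, so this adjunction descends to one between $\Sh_{\tau_{E_{\beta}}}(C_{\beta})_{\delta_0}$ and $\Sh_{\tau_E}(C)_{\delta_0}$.

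First I would check that $\operatorname{Lan}_{\iota}$ is fully faithful: because $\iota$ is fully faithful, for each $Y \in C_{\beta}$ the slice $(C_{\beta})_{/Y}$ has terminal object $\mathrm{id}_Y$, so the pointwise colimit formula collapses to $(\operatorname{Lan}_{\iota} f)(Y) \simeq f(Y)$, making the unit $f \to (\operatorname{Lan}_{\iota} f) \circ \iota$ an equivalence. Next I would identify the essential image. Since $(C,E)$ is $\beta$-accessible, $C^{\op}$ is $\beta$-accessible, hence $C^{\op} \simeq \operatorname{Ind}_{\beta}(C_{\beta}^{\op})$. By the universal property of $\operatorname{Ind}_{\beta}$, a presheaf $F \colon C^{\op} \to \Space_{\delta_0}$ lies in the essential image of $\operatorname{Lan}_{\iota}$ if and only if $F$ preserves $\beta$-filtered colimits from $C^{\op}$, which is precisely the meaning of $F$ being a $\beta$-accessible functor. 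Intersecting with sheaves and applying the Proposition then yields the claimed equivalence.

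The main obstacle is matching terminology: one must verify that the $\beta$-accessible objects of $\Sh_{\tau_E}(C)_{\delta_0}$ are exactly the sheaves whose underlying functor commutes with $\beta$-filtered colimits in $C^{\op}$, so that the equivalence on presheaves cuts out the correct subcategory after sheafification. The remaining work is formal manipulation of the adjunction and the universal property of $\operatorname{Ind}_{\beta}$.
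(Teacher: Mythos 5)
Your proposal is correct and takes essentially the same route as the paper, which deduces the corollary directly from the preceding proposition together with the standard identification $C^{\op} \simeq \Ind_{\beta}(C_{\beta}^{\op})$, under which left Kan extension along $C_{\beta}^{\op} \hookrightarrow C^{\op}$ is fully faithful with essential image exactly the $\beta$-accessible (i.e.\ $\beta$-filtered-colimit-preserving) functors. The only (harmless) imprecision is attributing to the proposition the claim that restriction carries $\tau_E$-sheaves to $\tau_{E_{\beta}}$-sheaves; the proposition is an `if and only if' about Kan extensions, and for identifying the essential image it suffices to note that a $\beta$-accessible sheaf is the left Kan extension of its restriction, whence that restriction is a sheaf by the proposition.
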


\begin{ntn}
	If $(C,E)$ is an accessible presite, then we write
	\begin{equation*}
		\Sh_E^{\acc}(C)_{\delta_0} \subseteq \Fun(C^{\op},\Space_{\delta_0})
	\end{equation*}
	for the full subcategory spanned by the accessible sheaves.
	More generally, if $D$ is any $\delta_0$-presentable category, then $\Sh_E^{\acc}(C; D) \subseteq \Fun(C^{\op},D)$ is the full subcategory spanned by the accessible sheaves.
\end{ntn}

Now we may see that sheafification of accessible functors does not increase the size of the universe.

\begin{cor}
	Let $(C,E)$ be an accessible presite, and let $D$ be a $\delta_0$-presentable category.
	Then $\Sh_E^{\acc}(C; D)$ is a left exact localisation of the category $\Fun^{\acc}(C^{\op},D)$ of accessible functors $C^{\op} \to D$.
\end{cor}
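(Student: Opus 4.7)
The plan is to upgrade the preceding space-valued corollary to arbitrary $\delta_0$-presentable coefficients $D$ and then to exhibit the required localisation as the filtered union of localisations indexed by those tiny regular uncountable cardinals $\beta$ for which $(C,E)$ is $\beta$-accessible \emph{and} $D$ is $\beta$-presentable. Since $(C,E)$ is accessible and $D$ is $\delta_0$-presentable, this collection of $\beta$ is cofinal among tiny regular cardinals, and so taking a colimit over it will compute both sides of the asserted equivalence.

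The first step is to extend the preceding proposition to $D$-valued functors: if $(C,E)$ is $\beta$-accessible with $\beta$ tiny, regular and uncountable and $D$ is $\beta$-presentable, then a functor $f \colon C_\beta^{\op} \to D$ is a $\tau_{E_\beta}$-sheaf if and only if its left Kan extension $F \colon C^{\op} \to D$ is a $\tau_E$-sheaf. The Čech-nerve argument of the preceding proposition goes through verbatim once $D$ is $\beta$-presentable, since that is what is needed to interchange the $\beta$-filtered colimit over $\Lambda$ with finite products and, using uncountability of $\beta$, with the countable totalisation $\lim_{n \in \mbfDelta} X(\Cech_n(e))$. The consequence is an equivalence, via left Kan extension, between $\Sh_{\tau_{E_\beta}}(C_\beta; D)_{\delta_0}$ and the full subcategory of $\beta$-accessible $D$-valued sheaves inside $\Sh^{\acc}_E(C; D)$, compatibly with the respective inclusions into presheaves.

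The second step transports the standard sheafification on the tiny site $(C_\beta, E_\beta)$ to obtain a partial sheafification functor on the large site $(C,E)$. Because $(C_\beta, E_\beta)$ is tiny and $D$ is $\delta_0$-presentable, the inclusion $\Sh_{\tau_{E_\beta}}(C_\beta; D)_{\delta_0} \hookrightarrow \Fun(C_\beta^{\op}, D)$ admits a left exact left adjoint. Combining the equivalence of the first step with the equivalence $\Fun(C_\beta^{\op}, D) \simeq \Fun^{\acc}_\beta(C^{\op}, D)$ (given by left Kan extension, with inverse restriction, where $\Fun^{\acc}_\beta$ denotes $\beta$-accessible functors) produces a left exact left adjoint $L_\beta$ from $\beta$-accessible presheaves to $\beta$-accessible sheaves on $C$.

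Finally I would assemble the $L_\beta$. For $\beta \leq \beta'$ in our cofinal system, the functor $L_{\beta'}$ restricts to $L_\beta$ on $\Fun^{\acc}_\beta(C^{\op}, D) \subseteq \Fun^{\acc}_{\beta'}(C^{\op}, D)$ by uniqueness of left adjoints, since both compute the reflection into the common subcategory $\Sh^{\acc,\beta}_E(C;D) \subseteq \Sh^{\acc,\beta'}_E(C;D)$ of sheaves. Passing to the filtered colimit in $\beta$ yields a left adjoint $L \colon \Fun^{\acc}(C^{\op}, D) \to \Sh^{\acc}_E(C; D)$ to the inclusion, which remains left exact because each $L_\beta$ is and a filtered colimit of left exact functors is left exact. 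The main obstacle is the first step: keeping careful track of which $\beta$-smallness and $\beta$-presentability hypotheses are needed to interchange the $\beta$-filtered colimit computing the Kan extension with the Čech totalisation in the $D$-valued setting, and thereby verifying that left Kan extension preserves the descent condition for $D$-valued sheaves.
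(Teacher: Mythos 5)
Your outline follows the route the paper itself intends (the corollary is stated without a printed proof): extend the Proposition to $D$-valued functors, use left Kan extension to identify $\beta$-accessible presheaves and sheaves on $(C,E)$ with presheaves and sheaves on the tiny site $(C_\beta,E_\beta)$, and assemble the level-$\beta$ localisations over a cofinal family of $\beta$. The interchange condition you isolate is the right one: in a $\beta$-presentable $D$, $\beta$-filtered colimits commute with $\beta$-small limits, which handles both the finite products and the cosimplicial totalisation since $\beta$ is uncountable; and $\delta_0$-presentability of $D$ does supply $\beta$-presentability for all sufficiently large tiny regular $\beta$.

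Two of your assertions need more than you give them. First, cofinality of the admissible $\beta$: while presentability of $D$ propagates to all larger regular cardinals, $\beta$-accessibility of the presite only propagates to suitably larger cardinals (one must pass to sharply larger $\beta'$ and re-index the pro-presentations of morphisms of $E$); this is the same cardinal-arithmetic point the paper leaves implicit in its definition of macroaccessibility, but since your argument quantifies over all accessible functors you should at least signal it. Second, and more substantively, your justification of the compatibility $L_{\beta'}|_{\Fun^{\acc}_\beta} \simeq L_\beta$ is circular as written: restricted to $\beta$-accessible presheaves, $L_{\beta'}$ computes the reflection into $\beta'$-accessible sheaves, and invoking ``uniqueness of reflections into the common subcategory'' presupposes exactly what is at stake, namely that this reflection is again $\beta$-accessible. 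The correct argument is adjoint to a statement about restriction: restriction along $C_\beta \subseteq C_{\beta'}$ carries $\tau_{E_{\beta'}}$-sheaves to $\tau_{E_\beta}$-sheaves (Čech nerves and finite coproducts agree in the two subcategories), so left Kan extension carries $\tau_{E_\beta}$-local equivalences to $\tau_{E_{\beta'}}$-local equivalences; since it also carries sheaves to sheaves (apply the Proposition at level $\beta$ and then at level $\beta'$ to the common Kan extension over all of $C$), the unit $F \to L_\beta F$ already exhibits the level-$\beta'$ sheafification. Finally, note that left exactness of the $D$-valued sheafification on the tiny site--the sole source of ``left exact'' in the statement--is asserted from tininess and presentability alone; for $D=\Space_{\delta_0}$ this is standard topos theory, but for general presentable $D$ it is precisely the point that deserves an argument (e.g.\ via the same colimit--limit interchange you invoke, applied to a construction of the sheafification on the finitary site), so you should not present it as automatic.
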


\begin{exm}
	If $ C $ is a tiny regular disjunctive category, then $\Pro^{\delta_0}(C)$ is an accessible presite with its effective epimorphism topology.
\end{exm}

\begin{wrn}
	If $(C,E)$ is an accessible presite that is not tiny, please observe that $\Sh_E^{\acc}(C)_{\delta_0}$ cannot be expected to be a $\delta_0$-topos, or even $\kappa$-accessible with respect to a tiny cardinal $\kappa$.
	It is however locally tiny, and it does have many of the good features enjoyed by $\delta_0$-topoi.
	For convenience, we formalise the situation.
\end{wrn}

\begin{dfn}
	Let $D$ be an accessible category, and let
	\[
		L \colon \Fun^{\acc}(D, \Space_{\delta_0}) \to \XX \subseteq \Fun^{\acc}(D, \Space_{\delta_0})
	\]
	be a localisation.
	For any small regular cardinal $\alpha$, if $D$ is $\alpha$-accessible, then let us write $\XX_{\alpha}$ for the essential image of $L$ restricted to $\Fun(D_{\alpha}, \Space_{\delta_0}) \simeq \Fun^{\alpha\text{-}\acc}(D, \Space_{\delta_0})$.
	Equivalently, $\XX_{\alpha}$ is the intersection
	\[
		\XX_{\alpha} \simeq \XX \cap \Fun(D_{\alpha}, \Space_{\delta_0}) \period
	\]
	inside $\Fun^{\acc}(D, \Space_{\delta_0})$.
	We shall say that the localisation functor $L$ is \defn{macroaccessible} if for any small cardinal $\beta$, there exists a small regular cardinal $\alpha>\beta$ such that $D$ is $\alpha$-accessible, and $L$ restricts to an accessible functor
	\[
		L_{\alpha} \colon \Fun(D_{\alpha}, \Space_{\delta_0}) \to \XX_{\alpha} \subseteq \Fun^{\acc}(D_{\alpha}, \Space_{\delta_0}) \period
	\]

	A \defn{macropresentable} category is a category $\XX$ such that there exists an accessible category $D$ and a macroaccessible localisation
	\[
		L \colon \Fun^{\acc}(D, \Space_{\delta_0}) \to \Fun^{\acc}(D, \Space_{\delta_0})
	\]
	whose essential image is equivalent to $\XX$.

	A \defn{macrotopos} is a category $\XX$ such that there exists an accessible category $D$ and a left exact, macroaccessible localisation
	\[
		L \colon \Fun^{\acc}(D, \Space_{\delta_0}) \to \XX \subseteq \Fun^{\acc}(D, \Space_{\delta_0}) \period
	\]
\end{dfn}

\begin{nul}
	If $\XX$ is a macropresentable category, then $ \XX $ is the macroaccessible localisation of $\Fun^{\acc}(D, \Space_{\delta_0})$ for an accessible category $D$; let us write
	\begin{equation*}
		L \colon \Fun^{\acc}(D, \Space_{\delta_0}) \to \XX \subseteq \Fun^{\acc}(D, \Space_{\delta_0})
	\end{equation*}
	for the localisation functor.
	If $\alpha<\beta$ are regular cardinals with the properties that $D$ is both $\alpha$- and $\beta$-accessible and that $L$ restricts to accessible functors
	\[
		L_{\alpha} \colon \Fun(D_{\alpha}, \Space_{\delta_0}) \to \Fun^{\acc}(D_{\alpha}, \Space_{\delta_0}) \andeq L_{\beta} \colon \Fun(D_{\beta}, \Space_{\delta_0}) \to \Fun(D_{\beta}, \Space_{\delta_0}) \comma
	\]
	then we have an inclusion $\XX_{\alpha} \subseteq \XX_{\beta}$.
	The macropresentable category $ \XX $ is the $\delta_1$-small filtered colimit of the presentable categories $\XX_{\alpha}$ under fully faithful left adjoints.
	Similarly, if $\XX$ is a macrotopos, then the $\XX_{\alpha}$ are topoi, and so $\XX$ is a $\delta_1$-small filtered union of topoi under fully faithful left exact left adjoints.
\end{nul}

\begin{exm}
	If $(C,E)$ is an accessible presite, then $\Sh^{\acc}_E(C)_{\delta_0}$ is a macrotopos.
\end{exm}


\section{Pyknotic objects}


\subsection{Pyknotic sets}

\begin{nul}\label{nul:topologicalntn}
	Let $\TSpc$ denote the category of tiny topological spaces.
	Write
	\[
		\Comp \subset \TSpc
	\]
	for the full subcategory spanned by the \emph{compacta} -- i.e., tiny compact hausdorff topological spaces.
	
	We write $ \beta \colon \fromto{\TSpc}{\Comp} $ for the left adjoint to the inclusion, given by Stone--Čech compactification.

	The category $\Comp$ can be identified with the category of $\beta$-algebras on $\Set_{\delta_0}$, where $\beta \colon \Set_{\delta_0} \to \Set_{\delta_0}$ is the ultrafilter monad \cite[Chapter III, \S2.4]{MR861951}.
\end{nul}

\begin{nul}
	Since the category $ \Comp $ of compacta is a $ 1 $-pretopos, $ \Comp $ comes equipped with the \defn{effective epimorphism topology}; a collection of morphisms $ \{U_i \to U\}_{i \in I} $ is a cover if and only if there exists a finite subset $ I_0 \subset I $ such that the map
	\begin{equation*}
		\surjto{\coprod_{i \in I_0} U_i}{U}
	\end{equation*}
	is a surjection (=effective epimorphism in $ \Comp $).

	Note that \Cref{cnstr:presitefinitary} gives a complete characterisation of sheaves on $ \Comp $; see also \cite[Proposition B.5.5]{Ultracategories}.
\end{nul}



\begin{dfn}\label{def:PykS}
	The category of \defn{pyknotic sets} is the category
	\begin{equation*}
		\Pyk(\Set) \coloneq \Sheff(\Comp; \Set)
	\end{equation*}
	of small sheaves of sets on $ \Comp $ with respect to the effective epimorphism topology.
\end{dfn}

\begin{nul}\label{nul:coherenceofPykS}
	The category $\Pyk(\Set)$ is a coherent $ 1 $-topos.
	By the classification theorem for coherent $ 1 $-topoi \cite[Theorem C.6.5]{Ultracategories}, the coherent objects of $\Pyk(\Set)$ are exactly the compacta, regarded as representables.
\end{nul}

\begin{ntn}
	The $ 1 $-category $ \CG $ of \defn{compactly generated topological spaces} is the smallest full subcategory of the category \smash{$ \TSpc_{\delta_1} $} of \emph{small} topological spaces containing $ \Comp $ and closed under small colimits.
	In particular, $\CG$ is a colocalisation of \smash{$\TSpc_{\delta_1}$}.
\end{ntn}

\begin{exm}\label{exm:CGembeds}
	Let $X$ be a small topological space.
	Then the functor
	\[
		\Mor_{\TSpc_{\delta_1}}(-, X) \colon \Comp^{\op} \to \Set
	\]
	is pyknotic set.
	We can endow the underlying set of $X$ with the induced topology with respect to the class of continuous morphisms from compacta.
	This is as coarse as the topology on $X$, and coincides with the topology on $X$ if and only if $X$ is compactly generated.

	In other words, the Yoneda embedding extends to a functor $ \yo \colon \TSpc \to \Pyk(\Set)$ with a left adjoint defined by left Kan extension of the inclusion $ \incto{\Comp}{\TSpc} $ along $ \incto{\Comp}{\Pyk(\Set)} $.
	The counit of this adjunction is a homeomorphism on compactly generated topological spaces, and so the Yoneda embedding defines a fully faithful functor from compactly generated topological spaces into pyknotic sets;
	this expresses the category $ \CG $ as a \textit{localisation} of $ \Pyk(\Set) $.

	This is one important way in which topological spaces are different from pyknotic sets: compactly generated topological spaces are not stable under colimits in $\Pyk(\Set)$.

	Notationally, we'll often ignore the distinction between a compactly generated topological spaces and its corresponding pyknotic set. 
\end{exm}

Even though compactly generated topological spaces aren't closed under colimits in $ \Pyk(\Set) $, they are closed under a certain class of colimits:

\begin{lem}\label{lem:colimitinPykSofnicetopspaces}
	Let $ X_0 \to X_1 \to \cdots $ be a sequence of compactly generated topological spaces.
	Assume that the colimit $ \colim_n X_n $ in \smash{$ \TSpc_{\delta_1} $} is a $ T_1 $ topological space.
	Then the natural morphism
	\begin{equation*}
		\fromto{\textstyle\colim_{n \geq 0} \yo(X_n)}{\yo(\textstyle\colim_{n \geq 0} X_n)}
	\end{equation*}
	is an equivalence in $ \Pyk(\Set) $.
\end{lem}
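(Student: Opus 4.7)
The plan is to reduce the asserted equivalence in $\Pyk(\Set)$ to a bijection of hom-sets at each compactum, and then establish this via a Milnor-style argument exploiting the $T_1$ hypothesis.

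Since $\Pyk(\Set)$ is the category of small sheaves on $\Comp$ for the effective epimorphism topology, the colimit $\colim_n\yo(X_n)$ is the sheafification of the pointwise colimit presheaf $K\mapsto\colim_n\Mor_{\TSpc_{\delta_1}}(K,X_n)$. I claim that the natural map to the pyknotic set $\yo(\colim_n X_n)=\Mor_{\TSpc_{\delta_1}}(-,\colim_n X_n)$ is already a bijection at every compactum $K$; since the target is manifestly a sheaf, this immediately identifies it with the sheafification and proves the equivalence.

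Fix a compactum $K$, set $X:=\colim_n X_n$ with structure maps $\phi_n\colon X_n\to X$, and write $A_n:=\phi_n(X_n)$. For surjectivity, let $f\colon K\to X$ be continuous; the first step is to show $f(K)\subseteq A_m$ for some $m$. Suppose not, and pick distinct points $y_k\in f(K)\setminus A_{m_k}$ for a strictly increasing sequence $m_0<m_1<\cdots$; set $S:=\{y_k:k\geq 0\}$. For any subset $T\subseteq S$ and any $\ell$, the preimage $\phi_\ell^{-1}(T)\subseteq X_\ell$ is a finite union of the preimages $\phi_\ell^{-1}(y_k)$ over the finitely many $y_k\in A_\ell$; each is closed in $X_\ell$ because $X$ is $T_1$ (singletons in $X$ are closed, hence so are their continuous preimages). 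Thus $T$ is closed in the colimit topology on $X$, so $S$ is a closed discrete subset of the compactum $f(K)$, forcing $S$ to be finite---a contradiction.

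Having $f(K)\subseteq A_m$, one upgrades this image-level containment to an honest continuous factorization $\tilde f\colon K\to X_n$ for some $n\geq m$ (possibly after passing to a surjective cover $K'\twoheadrightarrow K$ in $\Comp$): by compact generation of $X_n$, one produces a compactum $L\subseteq X_n$ with $\phi_n(L)=f(K)$, and the pullback $K':=K\times_{f(K)} L$ is then a compactum surjecting onto $K$ with a tautological lift to $X_n$, which suffices by the sheaf condition for the effective epimorphism topology. Injectivity of the comparison map is handled by a symmetric argument: if $g_1,g_2\colon K\to X_n$ satisfy $\phi_n g_1=\phi_n g_2$, then they must agree in some $X_m$, for otherwise the decreasing sequence of disagreement sets at successive stages would produce an infinite closed discrete subset of $K$ by the same $T_1$-based discreteness argument. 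The main obstacle will be the lifting step for surjectivity: passing from image-level containment in $A_m$ to a genuine continuous lift (or a lift over a compact cover), where the compact generation of each $X_n$ and the flexibility of the effective epimorphism topology on $\Comp$ provide the essential tools.
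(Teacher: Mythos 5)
The first half of your argument --- the $T_1$/closed-discrete-subset argument showing that $f(K)\subseteq \phi_m(X_m)$ for some $m$ --- is correct, and it is exactly the point-set content that the paper does not reprove but cites from Lewis's thesis (Appendix A, Lemma 9.4). The genuine gap is in the ``upgrade'' step, which you yourself flag as the main obstacle and which your proposal does not actually supply. Compact generation of $X_n$ does \emph{not} produce a compactum $L\subseteq X_n$ with $\phi_n(L)=f(K)$: compact generation says the topology of $X_n$ is final with respect to maps from compacta, not that compact subsets of its image lift to compact subsets of $X_n$. For instance, the interval with the discrete topology is compactly generated (it is a coproduct of points), maps continuously onto the usual interval, but its only compact subsets are finite, so no such $L$ exists for that map. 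Moreover, even granting $L$, your proposed cover $K'=K\times_{f(K)}L$ need not be a compactum: it is the preimage in $K\times L$ of the diagonal of $f(K)$, and since $\colim_n X_n$ is only assumed $T_1$, the space $f(K)$ need not be Hausdorff, so its diagonal need not be closed and $K'$ need not be closed in $K\times L$, hence need not be compact (nor is $X_n$, and so $L$, assumed Hausdorff in the first place). Note also the internal tension: your opening reduction claims the presheaf colimit is already objectwise isomorphic to $\yo(\colim_n X_n)$, but the surjectivity argument then retreats to covers; that retreat is repairable in principle (local surjectivity and local injectivity suffice after sheafification), but only if the lifting step itself is established, which it is not.

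The injectivity half is likewise not a ``symmetric argument'': if $g_1,g_2\colon K\to X_n$ become equal in the colimit, the disagreement sets at stage $m$ are the loci where the two composites $K\to X_m$ differ, and these live over $X_m$, which is not assumed $T_1$ or Hausdorff; so they need not be closed in $K$, and the closed-discrete trick from your Step 1 does not transport. For comparison, the paper's proof reduces everything to a single point-set bijection by categorical bookkeeping: $\yo(K)$ is a compact object of $\Pyk(\Set)$, so $\Map_{\Pyk(\Set)}(\yo(K),-)$ commutes with the filtered colimit, and full faithfulness of $\CG\hookrightarrow\Pyk(\Set)$ then identifies both sides with $\colim_n\Map_{\CG}(K,X_n)$ and $\Map_{\CG}(K,\colim_n X_n)$ respectively; the bijection between these is quoted from Lewis. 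Your Step 1 recovers the image statement in Lewis's lemma, but the passage from image containment to a continuous factorisation (even after a cover in $\Comp$) is precisely what is missing, so the proposal as it stands does not prove the lemma.
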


\begin{proof}
	For each compactum $ K $, the object $ \yo(K) \in \Pyk(\Set) $ is compact \cite[Lemma 5.8.2]{exodromy}, so we have isomorphisms
	\begin{align*}
		\Map_{\Pyk(\Set)}(\yo(K),\textstyle\colim_{n \geq 0} \yo(X_n)) &\isomorphic \textstyle\colim_{n \geq 0} \Map_{\Pyk(\Set)}(\yo(K),\yo(X_n)) \\
		&\isomorphic \textstyle\colim_{n \geq 0} \Map_{\CG}(K,X_n) \\
		&\isomorphic \Map_{\CG}(K,\textstyle\colim_{n \geq 0} X_n) \\
		&\isomorphic \Map_{\Pyk(\Set)}(\yo(K),\yo(\textstyle\colim_{n \geq 0} X_n)) \period
	\end{align*}
	The second isomorphism is by the full faithfulness of $ \yo \colon \incto{\CG}{\Pyk(\Set)} $.
	The third isomorphism is by \cite[Appendix A, Lemma 9.4]{LewisThesis}, which states that for any map from a compactum $ f \colon \fromto{K}{\colim_{n \geq 0} X_n} $, the image of $ f $ factors through some $ X_n $.
\end{proof}

\begin{wrn}
	Note that \cite[Appendix A, Lemma 9.4]{LewisThesis} used in the proof of \Cref{lem:colimitinPykSofnicetopspaces} does not hold for more general filtered colimits: the unit interval is the filtered colimit of all of its countable subspaces, but the identity map does not factor through a countable subspace.
\end{wrn}

\begin{exm}
	The category $ \Pyk(\Set) $ is compactly generated and the Yoneda embedding $ \incto{\Comp}{\Pyk(\Set)} $ carries compacta to compact objects of the category $ \Pyk(\Set) $ \SAG{Corollary}{A.2.3.2}.
	Thus the Yoneda embedding extends to a fully faithful embedding
	\begin{equation*}
		\incto{\Ind(\Comp)}{\Pyk(\Set)} 
	\end{equation*}
	\HTT{Proposition}{5.3.5.11}.
	Regarding profinite sets as Stone topological spaces under Stone duality, we thus obtain an embedding
	\begin{equation*}
		\incto{\Ind(\Pro(\Setfin))}{\Pyk(\Set)} \period
	\end{equation*}
	
	Indprofinite sets and extensions to indpro$\cdots$indprofinite sets have been exploited by Kato in studying higher local fields \cite{MR1804933}, as well as Mazel-Gee--Peterson--Stapleton in homotopy theory \cite[\S2]{MR3402340}.
	In particular, local fields of dimension at most $ 1 $ may be understood in terms of indprofinite sets.
\end{exm}

\begin{exm}
	Since a compactum has a unique uniformity compactible with its topology \cite[Chapter II, \S4, \P1, Theorem 1]{MR1726779}, any uniform space $ U $ defines a pyknotic set by the assignment $ \goesto{K}{\Mor_{\Unif}(K,U)} $.
	This restricts to a fully faithful embedding from the full subcategory of \defn{compactly generated} uniform spaces -- those uniform spaces $ U $ for which a set-map $ \fromto{U}{U'} $ to another uniform space $ U' $ is uniformly continuous if and only if for every uniformly continuous map $ \fromto{K}{U} $ from a compactum, the composite $ \fromto{K}{U'} $ is continuous. 
\end{exm}


\subsection{Pyknotic spaces}

\begin{nul}\label{nul:descriptionsofPykS}
	Define two full subcategories
	\[
		\EStn \subset \Stn \subset \Comp
	\]
	as follows:
	\begin{itemize}
		\item
			$\Stn$ is spanned by the \emph{Stone} topological spaces -- i.e., tiny compact hausdorff spaces that are totally disconnected;
		\item
			$\EStn$ is spanned by the \emph{Stonean} topological spaces -- i.e., tiny compact hausdorff spaces that are extremally disconnected.
	\end{itemize}
	All of these categories are small but not tiny.

	Under Stone duality, the category $\Stn$ can be identified with the category \smash{$\Pro(\Set^{\fin})$} of profinite sets.
	By Gleason's theorem, the category $\EStn$ can be identified with the category of projective objects of $\Comp$ \cites{MR0121775}[Chapter III, \S3.7]{MR861951}; equivalently, a topological space is Stonean if and only if it can be exhibited as the retract of $\beta(S)$ for some (tiny) set $S$.

	Restriction of presheaves defines equivalences of $ 1 $-categories
	\begin{equation*}
		\Pyk(\Set) \coloneq \Sheff(\Comp;\Set) \equivalence \Sheff(\Stn;\Set) \equivalence \Sheff(\EStn;\Set) \period
	\end{equation*}
	These equivalences follow from the from the following three facts:
	\begin{itemize}
		\item If $ (C,\tau) $ is a $ 1 $-site and $ C' \subset C $ is a \textit{basis} for the topology $ \tau $ \cite[Definition B.6.1]{Ultracategories}, then $ \tau $ restricts to a topology $ \tau' $ on $ C' $ and restriction defines an equivalence of $ 1 $-categories
		\begin{equation*}
			\equivto{\Sh_{\tau}(C;\Set)}{\Sh_{\tau}(C';\Set)} \rlap{\ ;}
		\end{equation*}
		see \cite[Propositions B.6.3 \& B.6.4]{Ultracategories}.

		\item A Stone space $ S $ is extremally disconnected if and only if $ S $ is a retract of the Stone--Čech compactification of a discrete space.

		\item For every compactum $ X $, there is a natural surjection $ \surjto{\beta(X^{\delta})}{X} $ from the Stone--Čech compactification of the discrete space $ X^{\delta} $ with underlying set $ X $ to $ X $ (cf. \cite[Remark  2.8]{Scholze:diamonds}).
		Hence the subcategories $ \Stn \subset \Comp $ and $ \EStn \subset \Comp $ are bases for the effective epimorphism topology on $ \Comp $. 
	\end{itemize}
\end{nul}

\begin{wrn}\label{wrn:PykS}
	Since the $ 1 $-sites $ \Comp $ and $ \Stn $ have finite limits and the inclusion $ \incto{\Stn}{\Comp} $ preserves finite limits, from \Cref{nul:descriptionsofPykS} we deduce that restriction defines an equivalence of $ 1 $-localic topoi
	\begin{equation*}
		\equivto{\Sheff(\Comp)}{\Sheff(\Stn)} \period
	\end{equation*}

	However, as pointed out to us by Dustin Clausen and Peter Scholze, since the $ 1 $-site $ \EStn $ of Stonean spaces does not have finite limits, restriction only defines an equivalence
	\begin{equation*}
		\equivto{\Sheff^{\hyp}(\Comp)}{\Sheff^{\hyp}(\EStn)}
	\end{equation*}
	on topoi of \textit{hypersheaves}.

	The topos $ \Sheff(\EStn) $ is in fact already hypercomplete (\Cref{cor:PykSashypercomp}), whence we obtain an equivalence
	\[
		\Sheff^{\hyp}(\Comp) \simeq \Sheff(\EStn)
	\]
	but $\Sheff(\Comp)$ is not hypercomplete, so it remains different.
\end{wrn}

\begin{dfn}
	A \defn{pseudopyknotic space} is a sheaf on $\Comp$ for the effective epimorphism topology.
	We write
	\begin{equation*}
		\PsiPyk(\Space) \coloneq \Sheff(\Comp)
	\end{equation*}
	for the category of pseudopyknotic spaces.
	A \defn{pyknotic space} is a hypersheaf on $\Comp$.
	We write
	\begin{equation*}
		\Pyk(\Space) \coloneq \Sheff^{\hyp}(\Comp)
	\end{equation*}
	for the category of pyknotic spaces.
\end{dfn}

\begin{nul}
	Equivalently, as explained above, pyknotic spaces are sheaves on the site of Stonean topological spaces.
\end{nul}

\begin{cnstr}
	For any compactum $K$, there is a \defn{standard free resolution}\footnote{Elsewhere called the \textit{bar construction}.} of $K$, regarded as an algebra for the ultrafilter monad $\beta$, \textit{viz}.,
	\[
		C^{\beta}_{\ast}(K)\coloneq\left[\augsimplicial{\beta^3(K)}{\beta^2(K)}{\beta(K)}{K}\right]\comma
	\]
	so that $ \beta^{n+1}(K) $ is the Stone--Čech compactification of the discrete space with underlying set $ \beta^n(K) $.
	The standard free resolution is a hypercovering of $K$ in $\Comp$ by Stonean topological spaces.
\end{cnstr}

\begin{prp}
	The following are equivalent for a pseudopyknotic space $X$.
	\begin{itemize}
		\item $X$ is pyknotic.
		\item $X$ is right Kan extended from the subcategory $\EStn \subset \Comp$.
		\item For any compactum $K$, the augmented cosimplicial space
		\[
			\augcosimplicial{X(K)}{X(\beta(K))}{X(\beta^2(K))}{X(\beta^3(K))}
		\]
		exhibits $X(K)$ as the limit $\lim_{\mbfDelta}X(C^{\beta}_{\ast}(K))$.
	\end{itemize}
\end{prp}

\begin{wrn}
	Not every pseudopyknotic space is pyknotic.
\end{wrn}

\begin{exm}
	We have already seen that compactly generated topological spaces and compactly generated uniform spaces embed fully faithfully into pyknotic sets;
	consequently, they embed into pyknotic spaces as well.

	Furthermore, since the inclusion of $ 0 $-truncated objects in a topos preserves filtered colimits, \Cref{lem:colimitinPykSofnicetopspaces} shows that the embedding $ \incto{\CG}{\Pyk(\Space)} $ commutes with colimits of sequences whose colimit is a $ T_1 $ topological space.
\end{exm}

\begin{nul}\label{nul:PykSasnonablianderived}
	Since Stonean spaces are projective objects of $ \Comp $, the Čech nerve of any surjection in $ \EStn $ is a split simplicial object, so a functor $ F \colon \fromto{\EStn^{\op}}{\Space} $ is a sheaf with respect to the effective epimorphism topology if and only if $ F $ carries coproducts in $ \EStn $ to products in $ \Space $.
	That is to say, the category $ \Sheff(\EStn) $ is the nonabelian derived category\footnote{See \cite[\HTTsec{5.5.8}]{HTT} for more on nonabelian derived categories.} $ \PP_{\Sigma}(\EStn)$ of the category $ \EStn $ Stonean topological spaces.

	From any Stone topological space one may extract the Boolean algebra of clopens;
	Stone duality is the assertion that this defines an equivalence between $\Stn$ and the opposite of the category $\Bool$ of Boolean algebras.
	This equivalence then restricts to an equivalence between $\EStn$ and the opposite of the category $\Boolcomp$ of complete Boolean algebras.
	Consequently, a pyknotic object of $D$ may be understood as a functor $\Boolcomp \to D$ that preserves finite products.
\end{nul}

\begin{nul}
	Since finite products commute with sifted colimits in $ \Space $, we see that
	\begin{equation*}
		\Pyk(\Space) \subset \Fun(\EStn^{\op},\Space) 
	\end{equation*}
	is closed under sifted colimits.
	In particular, geometric realisations of simplicial pyknotic spaces are computed in $ \Fun(\EStn^{\op},\Space) $. 
\end{nul}

\begin{exm}\label{exm:quotientsarequotients}
	As a consequence, we find that it is relatively easy to form \defn{quotient pyknotic structures}.
	For example, if $X$ is a pyknotic set and $R \subset X \times X$ is an equivalence relation thereupon, then the quotient $X/R$ can be computed objectwise on Stonean topological spaces:
	\[
		(X/R)(K) \simeq X(K) / R(K)\period
	\]

	In a similar vein, if $X_{\ast}$ is a simplicial pyknotic space, then its realisation can be computed objectwise on Stonean topological spaces:
	\[
		|X_{\ast}|(K) \simeq |X_{\ast}(K)| \period
	\]
\end{exm}

\begin{cnstr}\label{nul:PykSislocal}
	The global sections functor $ \Gammalowerstar \colon \fromto{\Pyk(\Space)}{\Space} $ is given by evaluation at the one-point compactum $ \pt $.
	For any pyknotic space $X$, we call $\Gammalowerstar(X)$ the \defn{underlying space} of $X$.
	When there's no possibility of confusion, we simply write $X$ for $\Gammalowerstar(X)$.

	Left adjont to this is the constant sheaf functor $ \Gammaupperstar \colon \fromto{\Space}{\Pyk(\Space)}$ that carries a space $Y$ to what we will call the \defn{discrete pyknotic space}
	\[
		Y^{\disc} \coloneq \Gammaupperstar(Y)
	\]
	attached to $Y$.
	
	The underlying space functor $ \Gammalowerstar $ also admits a \textit{right} adjoint $ \Gammauppershriek \colon \fromto{\Space}{\Pyk(\Space)} $: for $ X \in \Space $ the sheaf $ \Gammauppershriek(X) \colon \fromto{\Comp^{\op}}{\Space} $ is given by the assignment
	\begin{equation*}
		\goesto{K}{\prod_{k \in |K|} X} \comma
	\end{equation*}
	i.e., the product of copies of $ X $ indexed by the underlying \textit{set} of the compactum $ K $.
	For any space $X$, we call
	\[
		X^{\indisc} \coloneq \Gammauppershriek(X)
	\]
	the \defn{indiscrete pyknotic space} attached to $X$.

	The composite $ \Gammalowerstar \Gammauppershriek \colon \fromto{\Space}{\Space} $ is equivalent to the identity, so the indiscrete functor $ \Gammauppershriek $ is fully faithful, whence so is the discrete functor $ \Gammaupperstar \colon \fromto{\Space}{\Pyk(\Space)} $.
	In the language of \cite[Definition 7.2.2]{exodromy}, the topos $ \Pyk(\Space) $ is \textit{local} with centre $ \Gammauppershriek $.
	In particular, $ \Pyk(\Space) $ has homotopy dimension $ 0 $ \cite[Lemma 7.2.5]{exodromy}.

	Accordingly, a pyknotic space in the essential image of $\Gammaupperstar$ is said to be \defn{discrete}, a pyknotic space in the essential image of $\Gammauppershriek$ is said to be \defn{indiscrete}.
\end{cnstr}

\begin{nul}
	In particular, note that if $X$ is a presheaf $\Comp^{\op} \to \Space$, then its hypersheafification $X^{+}$ has the same underlying set.
	That is, $\Gammalowerstar(X) \to \Gammalowerstar(X')$ is an equivalence:
	indeed, for any space $Y$, the map
	\[
		\Map(X^{+}, \Gammauppershriek(Y)) \simeq \Map(\Gammalowerstar(X^{+}), Y) \to \Map(\Gammalowerstar(X), Y) \simeq \Map(X, \Gammauppershriek(Y))
	\]
	is an equivalence, since $\Gammauppershriek(Y)$ is a sheaf.
\end{nul}

\begin{exm}
	For any finite set $J$, the discrete pyknotic set $J^{\disc}$ is the sheaf $K \mapsto \Map(J, K)$ represented by $J$.
	If $\{J_{\alpha}\}_{\alpha\in\Lambda}$ is an inverse system of finite sets, then the limit
	\[
		\lim_{\alpha\in\Lambda} J_{\alpha}^{\disc}
	\]
	is the sheaf represented by the Stone topological space $\lim_{\alpha\in\Lambda} J_{\alpha}$;
	this is not discrete.

	In particular, the discrete functor $\Gammaupperstar$ does not preserve limits, and so the topos $\Pyk(\Space)$ is -- by design -- not \defn{cohesive} in the sense of Schreiber \cite[Definition 3.4.1]{Schreiber:cohesive}.
\end{exm}

\begin{nul}
	The point $ \Gammauppershriek $ of the topos $\Pyk(\Space) $ admits a description coming from logic: $ \Gammauppershriek $ is the point induced by the morphism of $ 1 $-pretopoi $ \fromto{\Comp}{\Set} $ given by the forgetful functor (see \SAG{Proposition}{A.6.4.4}).
\end{nul}

\begin{nul}
	The point $ \Gammauppershriek$ of the topos $\Pyk(\Space) $ also admits a geometric description.
	Let $ k $ be a separably closed field.
	Then the hypercompletion of the proétale topos $ \Spec(k)_{\proet} $ of $ \Spec(k) $ is equivalent to $ \Pyk(\Space) $ (\Cref{exm:profinitespacesembedintoPykS,exm:proetasPykalg}).
	Every geometric point of a scheme defines a point of its proétale topos \stacks{0991}, so the essentially unique geometric point $ x $ of $ \Spec(k) $ defines a point
	\begin{equation*}
		\xlowerstar \colon \fromto{\Space}{\Spec(k)_{\proet}} \period
	\end{equation*}
	Under the identification $ \Spec(k)_{\proet}^{\hyp} \equivalent \Pyk(\Space) $, the point $ \Gammauppershriek $ is equivalent to $ \xlowerstar $.
\end{nul}

\begin{wrn}
	However, the centre $ \Gammauppershriek \colon \incto{\Space}{\Pyk(\Space)} $ is \textit{not} the only point of the topos $ \Pyk(\Space) $.
	For any topological space $X$, we have a pyknotic set $P_X$ that carries $K$ to the set of continuous maps $K \to X$, where the locally constant maps have been identified to a point:
	\[
		P_S(K) \coloneq \Map^{\cts}(K, X)/\Map^{\lc}(K, X) \period
	\]
	If $X$ is nonempty, then the pyknotic set $P_X$ has underlying set $\pt$;
	thus if $X$ is neither empty nor $\pt$, then $P_X$ is a nontrivial pyknotic structure on the point.
	See \stacks{0991} and also \Cref{cor:PykSenoughpoints}.
\end{wrn}

\begin{nul}
	Let $X$ be a space (respectively, a set).
	The \defn{category of pyknotic structures on $X$} is the fibre of the functor $\Gammalowerstar \colon \Pyk(\Space) \to \Space$ (resp., $\Gammalowerstar \colon \Pyk(\Set) \to \Set$).

	This category admits an initial object $X^{\disc}$ and a terminal object $X^{\indisc}$.
	Furthermore, the category of pyknotic structures on $X$ has all tiny limits and colimits.

	However, unlike the category of topologies on a set, it is not a poset.
	For example, any permutation of a nonempty set $S$ induces a automorphism of $P_S$.
\end{nul}

\begin{cnstr}
	Let $X$ be a space, and let $Y$ be a pyknotic space.
	For any map $f \colon X \to \Gammalowerstar(Y)$, there is a terminal object in the category of pyknotic structures on $X$ over $Y$;
	explicitly, this is the pullback
	\[
		X^f \coloneq \Gammauppershriek(X) \times_{\Gammauppershriek\Gammalowerstar(Y)} Y \period
	\]
	We call this the \defn{pyknotic structure on $X$ induced by $f$}.

	Dually, for any map $g \colon \Gammalowerstar Y \to X$, there is an initial object in the category of pyknotic structures on $X$ under $Y$;
	explicitly, this is the pushout
	\[
		X_g \coloneq \Gammaupperstar(X) \cup^{\Gammaupperstar\Gammalowerstar(Y)} Y\period
	\]
	We call this the \defn{pyknotic structure on $X$ coinduced by $g$}.
\end{cnstr}

\begin{exm}
	Let $Y$ be a topological space, and let $X \to Y$ be a map of sets.
	View $Y$ as a pyknotic set.
	Then the induced pyknotic structure on $X$ coincides with the pyknotic structure attached to the induced topology on $X$.
\end{exm}



\subsection{Pyknotic objects}

In the previous subsection, we reformulated the definition of a pyknotic space in terms of finite-product-preserving presheaves on Stonean spaces.
We can thus define pyknotic objects in any category with finite products. 

\begin{dfn}
	Let $D$ be a category with all finite products.
	A \emph{pyknotic object} of $D$ is a functor $\EStn^{\op} \to D$ that carries finite coproducts of Stonean topological spaces to products in $D$.
	We write
	\begin{equation*}
		\Pyk(D) \subseteq \Fun(\EStn^{\op},D)
	\end{equation*}
	for the full subcategory spanned by the pyknotic objects.
\end{dfn}

\begin{wrn}
	Since $\EStn$ is small but not tiny, $\Pyk(D)$ is not generally locally tiny, even if $D$ is.
	However, if $D$ is locally small, then $\Pyk(D)$ is locally small.
\end{wrn}

To correct this issue without large cardinals, Clausen and Scholze opt for the following.

\begin{dfn}[Clausen--Scholze]
	If $D$ is a $\delta_0$-accessible category, then a pyknotic object of $D$ is \defn{condensed} (relative to the tiny universe) if and only if its right Kan extension to $ \Stn $ is a $\delta_0$-accessible sheaf.
\end{dfn}

\begin{wrn}
	The indiscrete pyknotic set $Y^{\indisc}$ attached to a set $Y$ is condensed if and only if $Y$ has cardinality $\leq 1$.
\end{wrn}

\begin{nul}\label{nul:pykastensor}
	If $D$ is a category with all small limits, then $\Pyk(D)$ can be identified with the category of functors $\Pyk(\Space)^{\op} \to D$ that carry small colimits of $\Pyk(\Space)$ to limits in $D$.
	In particular, if $ D $ is a presentable category, then $ \Pyk(D)$ is the tensor product of presentable categories $\Pyk(\Space) \tensor D $. 
	In particular, if $ \XX $ is a topos, then $ \Pyk(\XX) $ is a topos.
\end{nul}

\begin{cnstr}\label{cnstr:discindisc}
	If $D$ is a presentable category, then we may tensor the left adjoints in \Cref{nul:PykSislocal} with $D$ to construct a chain of adjoints
	\begin{equation*}
		\begin{tikzcd}[sep=5.5em]
			\Pyk(D) \arrow[r,"\Gammalowerstar"{xshift=0ex,description}] & D \period \arrow[l, hooked', shift left=1.5ex, "\Gammauppershriek"{xshift=2.5ex,description}] \arrow[l, hooked', shift right=1.5ex, "\Gammaupperstar"{xshift=-2.5ex,description}]
		\end{tikzcd}
	\end{equation*}

	For any object $X$ of $D$, then when there's no possibility of confusion, we write simply $X$ for $\Gammalowerstar(X)$.
	For any pyknotic object $Y$ of $D$, we write
	\[
		Y^{\disc} \coloneq \Gammaupperstar(Y)
	\]
	for the \defn{discrete pyknotic object} attached to $Y$, and we write
	\[
		Y^{\indisc} \coloneq \Gammauppershriek(Y)
	\]
	for the \defn{indiscrete pyknotic object} attached to $Y$.
\end{cnstr}

\begin{exm}
	If $G$ is a topological group, then we may regard $G$ as a pyknotic group that carries a compactum $K$ to $\Map^{\cts}(K, G)$.
	This defines a functor from topological groups to pyknotic groups, which preserves limits and is fully faithful on compactly generated topological groups.

	In particular, if $\{G_{\alpha}\}_{\alpha\in \Lambda^{\op}}$ is an inverse system of groups, the inverse limit
	\[
		\lim_{\alpha\in \Lambda^{\op}} G_{\alpha}^{\disc}
	\]
	will generally not be discrete.
	For instance, the discrete group attached to a finite group $H$ is cocompact, whence
	\[
		\Hom_{\Pyk(\Grp)}\left(\lim_{\alpha\in \Lambda^{\op}} G_i^{\disc},H^{\disc}\right) \simeq \colim_{\alpha\in \Lambda}\Hom_{\Grp}(G_i,H) \period
	\]
\end{exm}

\begin{exm}
	The category $\Pyk(\Ab)$ is an abelian category, and 
	the category of compactly generated topological abelian groups embeds fully faithfully into $\Pyk(\Ab)$, in a manner that preserves tiny limits.
	Thus for any abelian group $A$, one obtains a discrete pyknotic abelian group $A^{\disc}$, but for example an infinite product
	\[
		\prod_{a\in I} A_i^{\disc}
	\]
	of finite abelian groups is not discrete.
	To see this explicitly, the discrete abelian group attached to a finite abelian group $B$ is cocompact, whence
	\[
		\Hom_{\Pyk(\Ab)}\left(\prod_{i\in I} A_i^{\disc},B^{\disc}\right) \simeq \bigoplus_{i\in I}\Hom_{\Ab}(A_i,B) \period
	\]
	The limits
	\[
		\widehat{\ZZ} \coloneq \lim_{m\in \NN^{\ast}} (\ZZ/m\ZZ)^{\disc} \andeq \widehat{\ZZ}_p \coloneq \lim_{n\in \NN_0} (\ZZ/p^n\ZZ)^{\disc}
	\]
	are similarly not discrete.
\end{exm}

\begin{exm}
	Let $A$ be a locally compact abelian group.
	Then we can define an abelian variant of our pyknotic set $P_X$:
	for any Stonean space $ K $, form the quotient group
	\[
		P_A(K) \coloneq \Map^{\cts}(K, A)/\Map^{\cts}(K, A^{\disc}) \period
	\]
	The underlying abelian group of $P_A$ is always trivial, but if $A$ is nontrivial, then $P_A$ is as well.
	Thus $A^{\disc} \to A$ is a monomorphism of pyknotic abelian groups, and $P_A$ is the cokernel $A/A^{\disc}$.
\end{exm}

\begin{exm}
	Thanks to \Cref{lem:colimitinPykSofnicetopspaces}, it is not only limits that are preserved by the embedding of compactly generated abelian groups into $\Pyk(\Ab)$.
	For example, let $ E $ be a local field.
	Then since $ E $ is a locally compact topological space, $ E $ is compactly generated.
	The separable closure $ \overline{E} $ is a hausdorff topological space, and $ \overline{E} $ can be obtained as the colimit of a tower
	\begin{equation*}
		\begin{tikzcd}[sep=1em]
			E \arrow[r, hooked] & E_1 \arrow[r, hooked] & E_2 \arrow[r, hooked] & \cdots \comma
		\end{tikzcd}
	\end{equation*}
	where the $ E_n \subset E_{n+1} $ is a finite extension of local fields.
	It follows from \Cref{lem:colimitinPykSofnicetopspaces} that the image of the compactly generated abelian group $ \overline{E} $ in $ \Pyk(\Ab) $ coincides with the the filtered colimit $ \colim_{n} E_n $ in $ \Pyk(\Ab) $.
\end{exm}

\begin{exm}
	Consider the derived category $\DD^-(\Ab)$ of abelian groups, and form the \defn{pyknotic derived category} $\DD^-_{\Pyk}(\Ab) \coloneq \Pyk(\DD^-(\Ab))$, which is a stable category.
	Here, we may compute $\Ext$ groups between pyknotic abelian groups, and we see that they may have cohomological dimension $2$.
	For example, let $\el$ be a prime number, and let $M$ be the cokernel in $\Pyk(\Ab)$ of the inclusion $(\ZZ/\el\ZZ^{\disc})^{\oplus\omega} \inclusion (\ZZ/\el\ZZ^{\disc})^{\times\omega}$.
	Since $\ZZ/\el\ZZ^{\disc}$ is cocompact, and since $\Ext$s of discrete pyknotic abelian groups can be computed in $\Ab$, we find that $\Ext^2_{\DD^-_{\Pyk}(\Ab)}(M,\ZZ/\el\ZZ^{\disc})$ does not vanish.
\end{exm}

This example is the same as the one found at the very end of Hoffmann--Spitzweck \cite{MR2329311};
accordingly, Dustin Clausen and Peter Scholze have proved the following result.

\begin{thm}[{Clausen--Scholze \cite[Corollary 4.9]{Scholze:condensednotes}}]
	Regard the category $ \LCA $ of locally compact abelian groups as a full subcategory of the degree $ 0 $ part of the Pyknotic derived category $ \DD_{\Pyk}^-(\Ab) $.
	Then the induced functor
	\begin{equation*}
		\fromto{\DD^b(\LCA)}{h\DD_{\Pyk}^-(\Ab)}
	\end{equation*}
	is fully faithful; here $ \DD^b(\LCA) $ is the derived category of $ \LCA $ in the sense of Hoffmann--Spitzweck \cite{MR2329311}.
\end{thm}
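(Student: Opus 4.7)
The plan is to establish full faithfulness by matching Yoneda $\Ext$ groups: one shows $\Ext_{\LCA}^i(A,B) \isomorphic \Ext_{\Pyk(\Ab)}^i(A,B)$ for every $A, B \in \LCA$ and every $i \geq 0$, where the left-hand side is computed in Hoffmann--Spitzweck's exact structure. A standard spectral sequence argument on bounded complexes then upgrades this pointwise comparison to full faithfulness of $\DD^b(\LCA) \to h\DD_{\Pyk}^-(\Ab)$.

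First I would verify that the inclusion $\LCA \subset \Pyk(\Ab)$ is exact for the Hoffmann--Spitzweck structure. The subtle point is that a strict continuous surjection $B \to C$ in $\LCA$ yields an epimorphism of pyknotic abelian groups, which follows because $B \to C$ admits continuous local sections: for any compactum $K$ and continuous $f \colon K \to C$, compactness of $f(K)$ allows one to cover $K$ by finitely many compact subsets over which $f$ lifts to $B$. Given exactness, the structure theorem for LCA groups (every $A \in \LCA$ has an open subgroup of the form $\RR^n \oplus L$ with $L$ compact and the quotient discrete) combined with the long exact sequence of $\Ext$ reduces the comparison to pairs where each of $A, B$ lies in one of the classes: discrete, $\RR^n$, or compact. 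The discrete-discrete case is immediate from the full faithfulness of $\Gammaupperstar \colon \Ab \to \Pyk(\Ab)$; the cases involving $\RR$ reduce to the classical vanishing of higher self-$\Ext$s of $\RR$ and its analogues in $\Pyk(\Ab)$, verified via explicit short resolutions.

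The main obstacle is the compact-compact case. Pontryagin duality writes a compact group $K$ as the dual of a discrete group $D$; one then resolves $K$ by products of circles $\mathbb{T}^I$ indexed by a generating set of $D$. The crucial lemma is that $\mathbb{T}$ is injective both in $\LCA$ (classical, via the projectivity of $\ZZ$ on the dual side) and, when tested against LCA targets, in $\Pyk(\Ab)$. This latter statement is a concrete vanishing claim for sheaf cohomology with values in $\mathbb{T}$ over $\Comp$ and is the principal technical input; it is exactly where the niceness of LCA groups, rather than arbitrary pyknotic abelian groups, is used (compare the pathological $\Ext^2$ of the quotient $M$ in the preceding example, whose underlying group is not LCA). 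Once this injectivity is available, all higher $\Ext$s collapse, and the comparison reduces to the tautological identification $\Hom_{\Pyk(\Ab)}(K, \mathbb{T}^I) \isomorphic \prod_I \Hom_{\LCA}(K, \mathbb{T})$.
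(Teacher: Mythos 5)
The paper itself does not prove this statement: it is quoted from Clausen--Scholze (Corollary 4.9 of the condensed notes), so there is no internal proof to compare against, and your proposal has to be judged as a proof of the cited result. Your reduction skeleton -- exactness of $\LCA \subset \Pyk(\Ab)$, d\'evissage via the structure theorem to the building blocks (discrete, $\RR^n$, compact), the Pontryagin-dual two-term coresolution $0 \to K \to (\RR/\ZZ)^I \to (\RR/\ZZ)^J \to 0$, and the passage from objects to bounded complexes -- has the same shape as the Clausen--Scholze argument and is fine as a reduction. The genuine gap is that the computational inputs carrying all the weight are asserted rather than proved, and the justification you sketch for the main one would fail. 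The ``crucial lemma'' that $\RR/\ZZ$ has vanishing higher $\Ext$ in $\Pyk(\Ab)$ against compact (or LCA) sources is essentially equivalent to the compact--compact case of the theorem, and it is \emph{not} ``a concrete vanishing claim for sheaf cohomology with values in $\RR/\ZZ$ over $\Comp$'': that vanishing is false. For a compactum $S$, writing $\ZZ[S]$ for the free pyknotic abelian group on $S$, the exponential sequence $0 \to \ZZ^{\disc} \to \RR \to \RR/\ZZ \to 0$ together with the comparison of pyknotic cohomology of $S$ with its sheaf cohomology gives $\Ext^1_{\Pyk(\Ab)}(\ZZ[S], \RR/\ZZ) \cong H^2(S;\ZZ)$, which is nonzero already for $S = S^2$. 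So the vanishing you need uses the abelian group structure of the source in an essential way; in the cited proof it comes from resolving a compact group $K$ by terms of the form $\ZZ[K^n]$ and a delicate cancellation against the generally nonvanishing cohomology of the spaces $K^n$, together with computations such as $\Ext^i_{\Pyk(\Ab)}(\RR,\ZZ^{\disc}) = 0$ for all $i$. Likewise ``the cases involving $\RR$ \dots verified via explicit short resolutions'' conceals exactly these computations ($\Ext^{i}_{\Pyk(\Ab)}(\RR,\ZZ^{\disc})=0$, $\Ext^{>0}_{\Pyk(\Ab)}(\RR,\RR)=0$), which do not follow formally from any short exact sequence. What you have is a correct reduction strategy whose hard core is missing.

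Two smaller repairs. The claim that a strict surjection $B \to C$ in $\LCA$ admits continuous sections over finitely many compact pieces of any compactum mapping to $C$ is not the right argument (such sections need not exist); the standard argument is that an open surjection of locally compact groups carries some compact $K' \subseteq B$ onto any prescribed compact subset of $C$, so a map $f \colon T \to C$ from a compactum lifts after pulling back along the surjection of compacta $T \times_C K' \to T$, which is what sheaf-epimorphy requires. Also, ``discrete--discrete is immediate from full faithfulness of $\Gammaupperstar$'' needs in addition that free discrete groups are projective in $\Pyk(\Ab)$ (evaluation at Stonean spaces is exact) to control $\Ext^{\geq 1}$, and the d\'evissage through Hoffmann--Spitzweck's $\DD^b(\LCA)$ needs the identification of maps $A \to B[i]$ there with Yoneda $\Ext$ for the strict exact structure; both are true and fixable, but should be said.
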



\subsection{Pyknotic objects of topoi}



\begin{ntn}
	Let $ C $ be a presentable category.
	For each integer $ n \geq -2 $, we write $ C_{\leq n} \subset C $ for the full subcategory spanned by the $ n $-truncated objects, and $ \tau_{\leq n} \colon \fromto{C}{C_{\leq n}} $ for the \defn{$ n $-truncation functor}, left adjoint to the inclusion $ \incto{C_{\leq n}}{C} $.
\end{ntn}

\begin{nul}\label{nul:truninPykofatopos}
	Let $ \XX $ be a topos and $ n \geq -2 $ an integer.
	The $ n $-truncation functor $ \tau_{\leq n} \colon \fromto{\XX}{\XX_{\leq n}} $ preserves finite products \HTT{Lemma}{6.5.1.2}, so we have a natural identification
	\begin{equation*}
		\Pyk(\XX)_{\leq n} = \Pyk(\XX_{\leq n}) \period
	\end{equation*}
	Under this identification, the $ n $-truncation functor $ \tau_{\leq n} \colon \fromto{\Pyk(\XX)}{\Pyk(\XX)_{\leq n}} $ is identified with
	\begin{equation*}
		\Pyk(\tau_{\leq n}) \colon \fromto{\Pyk(\XX)}{\Pyk(\XX_{\leq n})} \period
	\end{equation*} 
\end{nul}

\begin{lem}\label{lem:Pykofhypercomplete}
	Let $ \XX $ be a hypercomplete topos.
	Then the topos $ \Pyk(\XX) $ is hypercomplete.
\end{lem}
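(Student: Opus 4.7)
The plan is to reduce hypercompleteness of $\Pyk(\XX)$ to that of $\XX$ by checking on sections over Stonean spaces. Recall from \Cref{nul:PykSasnonablianderived} that pyknotic objects of $\XX$ are precisely the finite-product-preserving functors $\EStn^{\op} \to \XX$, and that $\Pyk(\XX) \subset \Fun(\EStn^{\op},\XX)$ is a full subcategory. In particular, equivalences in $\Pyk(\XX)$ are detected pointwise on $\EStn$.

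Next I would invoke \Cref{nul:truninPykofatopos}, which says that the $n$-truncation functor on $\Pyk(\XX)$ coincides with $\Pyk(\tau_{\leq n})$, i.e., pointwise application of the $n$-truncation functor of $\XX$. (This uses the fact that $\tau_{\leq n}\colon \XX \to \XX_{\leq n}$ preserves finite products, so that pointwise $n$-truncation of a finite-product-preserving functor $\EStn^{\op}\to \XX$ is again finite-product-preserving.) Combining these two observations, a morphism $f\colon X \to Y$ of $\Pyk(\XX)$ is $\infty$-connective if and only if for every Stonean space $S$, the morphism $f(S)\colon X(S)\to Y(S)$ in $\XX$ is $\infty$-connective.

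Since $\XX$ is hypercomplete, each $f(S)$ is then an equivalence in $\XX$; as equivalences in $\Pyk(\XX)$ are pointwise, $f$ itself is an equivalence. Hence $\Pyk(\XX)$ has no nontrivial $\infty$-connective morphisms, i.e., $\Pyk(\XX)$ is hypercomplete.

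There is no serious obstacle here: all the conceptual content is already packaged in \Cref{nul:PykSasnonablianderived} and \Cref{nul:truninPykofatopos}. The only point that deserves explicit verification is the compatibility of truncation with the finite-product-preservation condition, which is exactly what \HTT{Lemma}{6.5.1.2} (cited in \Cref{nul:truninPykofatopos}) provides.
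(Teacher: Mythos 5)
Your proof is correct and follows essentially the same route as the paper: both arguments use the identification of $n$-truncation in $\Pyk(\XX)$ with objectwise truncation (\Cref{nul:truninPykofatopos}, resting on \HTT{Lemma}{6.5.1.2}) to see that an $\infty$-connective morphism is objectwise $\infty$-connective, then apply hypercompleteness of $\XX$ and the fact that equivalences in $\Pyk(\XX)$ are detected objectwise on Stonean spaces (equivalently, complete Boolean algebras). No gaps.
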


\begin{proof}
	We need to show that if $ f \colon \fromto{U}{V} $ is a morphism in $ \Pyk(\XX) $ and for all $ n \geq -2 $ the morphism $ \tau_{\leq n}(f) \colon \fromto{\tau_{\leq n}(U)}{\tau_{\leq n}(V)} $ is an equivalence, then $ f $ is an equivalence.
	In this case, by \Cref{nul:truninPykofatopos} for each complete Boolean algebra $ B $ and integer $ n \geq -2 $, the morphism
	\begin{equation*}
		\tau_{\leq n}(f(B)) \colon \fromto{\tau_{\leq n}(U(B))}{\tau_{\leq n}(V(B))} 
	\end{equation*}
	is an equivalence.
	Since $ \XX $ is hypercomplete, this shows that for all $ B \in \Boolcomp $, the morphism $ f(B) \colon \fromto{U(B)}{V(B)} $ is an equivalence.
	Since equivalences in $ \Pyk(\XX) $ are checked objectwise, this shows that $ f $ is an equivalence.
\end{proof}

\begin{cor}\label{cor:PykSashypercomp}
	Restriction of presheaves $ \fromto{\Sheff(\Comp)}{\Pyk(\Space)} $ induces an equivalence
	\begin{equation*}
		\equivto{\Sheff^{\hyp}(\Comp)}{\Pyk(\Space)} \period
	\end{equation*}
\end{cor}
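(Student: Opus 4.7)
The plan is to apply the preceding \Cref{lem:Pykofhypercomplete} to the topos $\XX = \Space$ and combine the result with the restriction comparison already recorded in \Cref{wrn:PykS}. That warning observes that, even though $\EStn$ lacks finite limits, restriction of presheaves along $\incto{\EStn}{\Comp}$ does induce an equivalence
\[
	\Sheff^{\hyp}(\Comp) \simeq \Sheff^{\hyp}(\EStn) \period
\]
So the outstanding content is precisely that the inclusion $\Sheff^{\hyp}(\EStn) \subseteq \Sheff(\EStn)$ is an equality; once this is known, the composite equivalence identifies $\Sheff^{\hyp}(\Comp)$ with $\Sheff(\EStn)$, which by \Cref{nul:PykSasnonablianderived} is $\Pyk(\Space)$ in the sense of the finite-product-preserving presheaf description.

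To verify that $\Sheff(\EStn)$ is hypercomplete, I would directly apply \Cref{lem:Pykofhypercomplete} with $\XX = \Space$. Under the identification $\Sheff(\EStn) = \Pyk(\Space)$, the lemma asserts hypercompleteness of $\Sheff(\EStn)$ provided $\Space$ itself is hypercomplete; but this is standard, since a map of spaces that induces an equivalence on every $\tau_{\leq n}$ is an equivalence (Postnikov towers in $\Space$ converge).

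Assembling the pieces, restriction along $\incto{\EStn}{\Comp}$ yields
\[
	\Sheff^{\hyp}(\Comp) \simeq \Sheff^{\hyp}(\EStn) \simeq \Sheff(\EStn) = \Pyk(\Space) \comma
\]
which is the desired equivalence. There is no serious obstacle here: the substantive argument has been absorbed into \Cref{lem:Pykofhypercomplete}, and the corollary amounts to bookkeeping that combines that lemma with the site-change comparison between $\Comp$ and $\EStn$.
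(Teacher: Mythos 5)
Your proposal is correct and follows the paper's intended route: the corollary is exactly the combination of \Cref{lem:Pykofhypercomplete} applied to the hypercomplete topos $\Space$ (showing $\Sheff(\EStn)=\Pyk(\Space)$ is hypercomplete, so $\Sheff^{\hyp}(\EStn)=\Sheff(\EStn)$) with the restriction equivalence $\Sheff^{\hyp}(\Comp)\simeq\Sheff^{\hyp}(\EStn)$ recorded in \Cref{wrn:PykS}. No gaps; your bookkeeping matches the paper's argument.
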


\begin{cor}\label{cor:PykSenoughpoints}
	The topos $ \Pyk(\Space) $ has enough points.
\end{cor}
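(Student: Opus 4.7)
The plan is to reduce to the fact that the underlying $1$-topos of discrete objects, $\Pyk(\Set)$, has enough points, which follows immediately from Deligne's theorem together with \Cref{nul:coherenceofPykS}: $\Pyk(\Set)$ is a coherent $1$-topos, hence has enough points. The task is then to lift this statement from the $1$-topos to the $\infty$-topos $\Pyk(\Space)$.

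The first step is to realise $\Pyk(\Space)$ as a $1$-localic hypercomplete $\infty$-topos. By \Cref{cor:PykSashypercomp}, $\Pyk(\Space) \simeq \Sheff^{\hyp}(\Comp)$, which is the hypercompletion of the $1$-localic $\infty$-topos $\Sheff(\Comp)$; hypercompletion preserves the property of being $1$-localic, and the underlying $1$-topos of $0$-truncated objects is identified via \Cref{nul:truninPykofatopos} with $\Pyk(\Set)$.

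The second step is to translate ``enough points'' across this identification. Because $\Pyk(\Space)$ is hypercomplete, a morphism is an equivalence if and only if it induces equivalences on every truncation $\tau_{\leq n}$; combined with the $n$-truncation calculation of \Cref{nul:truninPykofatopos}, it suffices to check equivalences in each truncated topos $\Pyk(\Space)_{\leq n}$. Since $\Pyk(\Space)$ is $1$-localic, any point of the $1$-topos $\Pyk(\Set)$ lifts uniquely to a geometric morphism $\Space \to \Pyk(\Space)$ (via the equivalence between points of a $1$-localic $\infty$-topos and points of its $0$-truncated part), and the collection of these inverse images detects equivalences on $0$-truncated objects, and then on each $\tau_{\leq n}$ by descent.

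The main obstacle is the second step: making precise the lifting of points from the $1$-topos $\Pyk(\Set)$ to the hypercomplete $\infty$-topos $\Pyk(\Space)$ and verifying that the resulting family of stalk functors jointly detects equivalences of hypersheaves. Concretely, the care required is to ensure that a hypersheaf whose stalk at every point of $\Pyk(\Set)$ (in the $1$-topos sense) is trivial, in fact has trivial stalks at the corresponding $\infty$-categorical geometric points; this is where one invokes both hypercompleteness and the $1$-localicity established in the first step, reducing everything to the $0$-truncated case handled by Deligne.
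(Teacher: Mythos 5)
Your overall route is genuinely different from the paper's: the paper simply observes that $\Pyk(\Space)$ is the hypercompletion of the $1$-localic coherent topos $\Sheff(\Comp)$ and invokes the higher-categorical Deligne completeness theorem (together with the fact that such a hypercompletion is still locally coherent), whereas you want to use only the classical Deligne theorem for the coherent $1$-topos $\Pyk(\Set)$ (\Cref{nul:coherenceofPykS}) and then bootstrap via hypercompleteness. That reduction can be made to work, but as written it contains a genuine error. The claim that ``hypercompletion preserves the property of being $1$-localic'' is false, and in this very example it cannot hold: $\Sheff(\Comp)$ is the $1$-localic topos with underlying $1$-topos $\Pyk(\Set)$, and the paper records in \Cref{wrn:PykS} that $\Sheff(\Comp)$ is \emph{not} hypercomplete, so $\Pyk(\Space)=\Sheff^{\hyp}(\Comp)$ is a different topos with the same $0$-truncated part; since a $1$-localic topos is determined by its underlying $1$-topos, $\Pyk(\Space)$ is not $1$-localic. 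Hence you cannot lift points of $\Pyk(\Set)$ to $\Pyk(\Space)$ by citing ``the equivalence between points of a $1$-localic $\infty$-topos and points of its $0$-truncated part'' applied to $\Pyk(\Space)$ itself. The correct lifting goes in two steps: points of $\Pyk(\Set)$ correspond to points of the genuinely $1$-localic topos $\Sheff(\Comp)$, and then, because $\Space$ is hypercomplete, the universal property of hypercompletion \HTT{Proposition}{6.5.2.13} identifies points of $\Sheff(\Comp)$ with points of $\Pyk(\Space)$.

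The second step also needs more than ``by descent''. The standard way to finish is via homotopy sheaves: inverse image functors of points are left exact left adjoints, so they commute with truncations and with the formation of the sheaves $\pi_n$. If $f$ is a morphism of $\Pyk(\Space)$ all of whose stalks (at the lifted points) are equivalences, then each $\pi_n(f)$ is a morphism in (a slice of) $\Pyk(\Set)$ with bijective stalks, hence an isomorphism because $\Pyk(\Set)$ has enough points by classical Deligne; therefore $f$ is $\infty$-connective, and hypercompleteness of $\Pyk(\Space)$ (\Cref{cor:PykSashypercomp}, \Cref{lem:Pykofhypercomplete}) forces $f$ to be an equivalence. With these two repairs your argument is complete, and it is a more elementary (if longer) proof than the paper's one-line appeal to the $\infty$-categorical Deligne completeness theorem; what the paper's approach buys is brevity and the stronger general statement that any hypercomplete locally coherent topos has enough points, while yours isolates exactly which classical input and which formal $\infty$-topos facts are needed for this particular case.
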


\begin{proof}
	Since $ \Pyk(\Space) $ is the hypercompletion of the $ 1 $-localic coherent topos $ \Sheff(\Comp) $ this follows from the higher-categorical Deligne Completeness Theorem \SAG{Theorem}{A.4.0.5} and \SAG{Proposition}{A.2.2.2}.
\end{proof}

\begin{nul}
	Since the terminal object of $ \Pyk(\Space) $ is given by $ \Gammaupperstar(1_{\Space}) $ where $ 1_{\Space} \in \Space $ is the terminal object, the datum of a point of a pyknotic space $ X $ is the datum of a point of the underlying space $ X(\pt) \in \Space $.
	Hence the category $ \Pyk(\Space)_{\ast} $ of pointed objects in $ \Pyk(\Space) $ is canonically identified with the category $ \Pyk(\Space_{\ast}) $ of pyknotic pointed spaces.
\end{nul}

\begin{exm}
	Composition with $\pi_k \colon \Space_{\ast} \to P$ defines a functor
	\[
		\pi_k \colon \Pyk(\Space)_{\ast} \to \Pyk(P)\comma
	\]
	where $P$ is the category
	\[
		\left.
		\begin{aligned}
    		\Set_{\ast} & \\
    		\Grp & \\
    		\Ab
  		\end{aligned}
  		\right\}
		\text{\ when \ }k \kern0.25em
		\left\{
		\begin{aligned}
    		& = 0 \\
    		& = 1 \\
    		& \geq 2 \period
  		\end{aligned}
  		\right.
	\]
	These functors are collectively conservative, so that a morphism $f\colon X \to Y$ of pyknotic spaces is an equivalence if and only if for every $k\geq 0$, the morphism $\pi_k(f)$ is an isomorphism of pointed pyknotic sets, pyknotic groups, or pyknotic abelian groups, as appropriate.

	The upshot here is that pyknotic spaces have pyknotic homotopy groups.
\end{exm}

\begin{nul}
	If $ X \in \Pyk(\Space) $ is a coherent object, then the pyknotic set $ \pi_0(X) = \tau_{\leq 0}(X) $ is a coherent object of the coherent $ 1 $-topos $ \Pyk(\Set) $, hence representable by a compactum.
	More generally, for every point $ x \in X $ and integer $ n \geq 1 $, the homotopy pyknotic group $ \pi_n(X,x) $ is representable by a compact hausdorff group (abelian if $ n \geq 2 $).
\end{nul}

Now we analyze the Postnikov completeness of $ \Pyk(\XX) $.

\begin{ntn}
	For categories $ X $ and $ Y $ with finite products, write 
	\begin{equation*}
		\Funcross(X,Y) \subset \Fun(X,Y)
	\end{equation*}
	for the full subcategory spanned by those functors $ \fromto{X}{Y} $ that preserve finite products.
	Write \smash{$ \Catfp_{\delta_2} \subset \Cat_{\delta_2} $} for the subcategory with objects categories with finite products and morphisms functors that preserve finite products.

	Recall that the forgetful functor $ \fromto{\Catfp_{\delta_2}}{\Cat_{\delta_2}} $ preserves small limits.
	It follows readily that the functor
	\begin{equation*}
		\Funcross(B,-) \colon \fromto{\Catfp_{\delta_2}}{\Catfp_{\delta_2}}
	\end{equation*}		
	preserves small limits as well.
\end{ntn}

\begin{lem}\label{lem:Pykofpostnikovcomplete}
	Let $ \XX $ be a Postnikov complete topos.
	Then the topos $ \Pyk(\XX) $ is Postnikov complete.
\end{lem}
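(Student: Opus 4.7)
The plan is to mimic the proof of \Cref{lem:Pykofhypercomplete}: reduce Postnikov completeness of $\Pyk(\XX)$ to Postnikov completeness of $\XX$ by evaluating the relevant Postnikov tower objectwise on $\EStn^{\op}$ (equivalently, on $\Boolcomp$).

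The first step is to note that the inclusion $\Pyk(\XX) \subset \Fun(\EStn^{\op},\XX)$ is cut out by a limit condition (preservation of finite coproducts in $\EStn$ as products in $\XX$), hence $\Pyk(\XX)$ is stable under all small limits in $\Fun(\EStn^{\op},\XX)$. In particular, for any $X \in \Pyk(\XX)$ the Postnikov limit $\lim_n \tau_{\leq n} X$ is computed objectwise:
\[
	\Bigl(\lim_n \tau_{\leq n} X\Bigr)(B) \simeq \lim_n \bigl(\tau_{\leq n} X\bigr)(B)
\]
for every $B \in \Boolcomp$.

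The second step is to invoke \Cref{nul:truninPykofatopos}, which identifies truncation in $\Pyk(\XX)$ with $\Pyk$ applied to truncation in $\XX$. In objectwise terms this says $(\tau_{\leq n} X)(B) \simeq \tau_{\leq n}(X(B))$. Combining with the previous display, the canonical comparison map $X \to \lim_n \tau_{\leq n} X$ in $\Pyk(\XX)$, evaluated at $B$, becomes the Postnikov comparison map
\[
	X(B) \to \lim_n \tau_{\leq n}\bigl(X(B)\bigr)
\]
in $\XX$. Since $\XX$ is Postnikov complete, this is an equivalence for every $B \in \Boolcomp$. Equivalences in $\Pyk(\XX)$ are detected objectwise on $\EStn^{\op}$, so $X \to \lim_n \tau_{\leq n} X$ is an equivalence, which is precisely Postnikov completeness of $\Pyk(\XX)$.

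There is no real obstacle here: the entire argument is bookkeeping of the fact that both truncations and limits in $\Pyk(\XX)$ are computed pointwise on complete Boolean algebras, so Postnikov completeness transfers directly from $\XX$ to $\Pyk(\XX)$, exactly parallel to the hypercompleteness argument in \Cref{lem:Pykofhypercomplete}.
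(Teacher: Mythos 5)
Your objectwise bookkeeping (limits in $\Pyk(\XX)$ are computed in $\Fun(\EStn^{\op},\XX)$, and truncation is objectwise by \Cref{nul:truninPykofatopos}) is correct, but the final step conflates two different conditions. Postnikov completeness, in the sense the paper uses (and which its proof invokes via \SAG{Theorem}{A.7.2.4}), is the statement that the canonical functor $\XX \to \lim_n \XX_{\leq n}$ is an \emph{equivalence of categories}. What you prove is only that the unit is invertible: every object $X \in \Pyk(\XX)$ is the limit of its Postnikov tower, which amounts to full faithfulness of $\Pyk(\XX) \to \lim_n \Pyk(\XX)_{\leq n}$. You do not address essential surjectivity: one must also show that every Postnikov pretower $(X_n)_{n \geq 0}$ in $\Pyk(\XX)$ (with $X_n$ $n$-truncated and $\tau_{\leq n} X_{n+1} \simeq X_n$) is recovered as the tower of truncations of its limit, i.e.\ $\tau_{\leq n}(\lim_m X_m) \simeq X_n$. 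Convergence of Postnikov towers of objects does not formally imply this, since truncation need not commute with the inverse limit; the two conditions are independent halves of the definition. So the sentence ``which is precisely Postnikov completeness of $\Pyk(\XX)$'' is where the gap lies.

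The paper sidesteps the issue by arguing at the level of categories rather than objects: since truncation functors preserve finite products and $\Funcross(\Boolcomp,-)$ preserves limits of categories with finite products, applying it to the equivalence $\XX \simeq \lim_n \XX_{\leq n}$ yields $\Pyk(\XX) \simeq \lim_n \Pyk(\XX)_{\leq n}$ outright, with \Cref{nul:truninPykofatopos} identifying the terms and the transition functors. Your approach is nonetheless salvageable by the same objectwise technique you already used: given a pretower $(X_n)$ in $\Pyk(\XX)$, its limit $X$ is computed objectwise, and Postnikov completeness of $\XX$ (the counit half, applied in $\XX$) gives $\tau_{\leq n}(X(B)) \simeq X_n(B)$ for every $B \in \Boolcomp$; since truncation in $\Pyk(\XX)$ is objectwise, this yields $\tau_{\leq n} X \simeq X_n$. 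Adding that paragraph would make your proof complete, and parallel in spirit (though not in form) to the paper's.
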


\begin{proof}
	Since $ \XX $ is Postnikov complete, the natural functor
	\begin{equation*}
		\fromto{\XX}{\textstyle\lim_{n} \XX_{\leq n}}
	\end{equation*}
	to the inverse limit in $ \Cat $ along the $ n $-truncation functors $ \tau_{\leq n} \colon \fromto{\XX_{\leq n+1}}{\XX_{\leq n}} $ is an equivalence \SAG{Theorem}{A.7.2.4}.
	Since the $ n $-truncation functors on a topos preserve finite products \HTT{Lemma}{6.5.1.2}, we obtain an equivalence
	\begin{equation}\label{eq:Pykoftower}
		\equivto{\Pyk(\XX)}{\textstyle \lim_{n} \Pyk(\XX_{\leq n})} \comma
	\end{equation}
	where the latter inverse limit is computed in $ \Cat_{\delta_2} $ along the functors 
	\begin{equation*}
		\Pyk(\tau_{\leq n}) \colon \fromto{\Pyk(\XX_{\leq n+1})}{\Pyk(\XX_{\leq n})} \period
	\end{equation*}
	In light \Cref{nul:truninPykofatopos}, the equivalence \eqref{eq:Pykoftower} shows that $ \Pyk(\XX) $ is Postnikov complete.
\end{proof}

\begin{exm}
	In particular, $\Pyk(\Space)$ is Postnikov complete.
	Hence any pyknotic space $X$ can be exhibited as the limit of its Postnikov tower
	\[
		X \to \cdots \to \tau_{\leq 2} X \to \tau_{\leq 1} X \to \tau_{\leq 0} X \to \tau_{\leq -1} X \to \tau_{\leq -2} X = \pt \comma
	\]
	and the fibre of $\tau_{\leq k} X \to \tau_{\leq k-1} X$ over a point is $k$-truncated and $k$-connected.
	Since $\Pyk(\Space)$ has homotopy dimension $0$ (\Cref{nul:PykSislocal}), it follows that each of these fibres is the classifying pyknotic space $B^k\pi_k(X)$, where $\pi_k(X)$ is:
	\[
		\left.
		\begin{aligned}
    		\text{either empty or $\pt$} & \\
    		\text{a pointed pyknotic set} & \\
    		\text{a pyknotic group} & \\
    		\text{a pyknotic abelian group}
  		\end{aligned}
  		\right\}
		\text{\ when \ }k \kern0.25em
		\left\{
		\begin{aligned}
    		& = -1 \\
    		& = 0 \\
    		& = 1 \\
    		& \geq 2 \phantom{-} \period
  		\end{aligned}
  		\right.
	\]
\end{exm}


\subsection{Tensor products of pyknotic objects}

Let $D^{\otimes}$ be a presentably symmetric monoidal category -- i.e., a presentable category with a symmetric monoidal structure in which the tensor product functor $D \times D \to D$ preserves colimits separately in each variable.
Let $X$ and $Y$ be two pyknotic objects of $D$;
we now set about showing that their tensor product $X \otimes_D Y$ admits a canonical pyknotic structure.

\begin{cnstr}
	Let $D^{\otimes}$ be a presentably symmetric monoidal category.
	Thus $D^{\otimes}$ is a commutative algebra object in $\Pr^L$.

	Since $\Comp$ is a symmetric monoidal category under the product, the Day convolution symmetric monoidal structure on $\Fun(\Comp^{\op}, D)$ coincides with the objectwise tensor product.
	The localisation functor $\Fun(\Comp^{\op}, D) \to \Pyk(D)$ is compatible with this symmetric monoidal structure, and so we obtain a symmetric monoidal structure $\Pyk(D)^{\otimes}$ on $\Pyk(D)$.

	Equivalently, the product of pyknotic spaces preserves colimits separately in each variable, so we obtain a presentably symmetric monoidal category $\Pyk(\Space)^{\times}$.
	Now we can identify
	\[
		\Pyk(D)^{\otimes} \simeq \Pyk(\Space)^{\times} \otimes D^{\otimes} \comma
	\]
	the tensor product (=coproduct) of the commutative algebras in $\Pr^L$.

	To be explicit, if $X$ and $Y$ are pyknotic objects of $D$, then their tensor product is the pyknotic object $X \otimes_{\Pyk(D)} Y$ that is the hypersheafification of the assignment
	\[
		K \mapsto X(K) \otimes_D Y(K) \period
	\]
	The unit is the discrete pyknotic object attached to the unit of $D$.
\end{cnstr}

\begin{exm}
	If the presentably symmetric monoidal category $D^{\otimes}$ is cartesian, then so is the symmetric monoidal structure $\Pyk(D)^{\otimes}$.
\end{exm}

\begin{nul}\label{nul:Gammalowerstarsymmetricmonoidal}
	Let $D^{\otimes}$ be presentably symmetric monoidal.
	Then by construction, the discrete functor $D \to \Pyk(D)$ extends to a symmetric monoidal left adjoint $D^{\otimes} \to \Pyk(D)^{\otimes}$, so that for any objects $U$ and $V$ of $D$, we have a natural equivalence
	\[
		U^{\disc} \otimes_{\Pyk(D)} V^{\disc} \simeq (U \otimes_D V)^{\disc} \period
	\]
	Since $\Gammalowerstar \colon \Pyk(\Space) \to \Space$ preserves finite products, it is also naturally symmetric monoidal, whence the functor $\Gammalowerstar \colon \Pyk(D) \to D$ is symmetric monoidal as well, so that for any two pyknotic objects $X$ and $Y$ of $D$, we obtain an equivalence
	\[
		X \otimes_D Y \simeq X \otimes_{\Pyk(D)} Y \period
	\]
	Also, if $X$ is an object of $D$ and if $Y$ is a pyknotic object of $D$, then there are equivalences in $D$
	\[
		\MOR_{\Pyk(D)}(X^{\disc}, Y) \simeq \MOR_D(X, Y) \andeq  \MOR_{\Pyk(D)}(Y, X^{\indisc}) \simeq \MOR_D(Y, X) \period
	\]
\end{nul}

\begin{exm}
	Let $A$ and $B$ be two pyknotic abelian groups.
	Then their tensor product $A \otimes B$ admits a canonical pyknotic structure.
	For example, one can form the adèles of $\QQ$ as a pyknotic abelian group in this manner:
	\[
		\AA_{\QQ} \coloneq (\widehat{\ZZ} \times \RR) \otimes_{\Pyk(\Ab)} \QQ^{\disc}\period
	\]
\end{exm}


\subsection{\texorpdfstring{$\Pyk$}{Pyk}-modules}

A \textit{$\Pyk$-module} is a presentable category $ C $ along with a functor
\[
	\Comp \times C \to C \comma \qquad (K, X) \mapsto K \otimes X
\]
equipped with equivalences $\pt \otimes X \simeq X$ and $(K \times L) \otimes X \simeq K \otimes (L \otimes X)$, which plays the rôle of a `continuous coproduct' of $X$ with itself indexed over the points of $K$.
Accordingly, we will insist upon the following axioms.
\begin{itemize}
	\item For any compactum $K$ and any small diagram $X \colon I \to C$, the natural map
	\[
		\textstyle\colim_{i\in I}(K \otimes X_i) \to K \otimes (\textstyle\colim_{i\in I} X_i)
	\]
	is an equivalence.

	\item For any object $X$ of $ C $ and any two compacta $K$ and $L$, the natural map
	\[
		(K \otimes X) \sqcup (L \otimes X) \to (K \sqcup L) \otimes X
	\]
	is an equivalence.

	\item For any object $X \in C$, any compactum $K$, and any hypercover $\surjto{L_{\ast}}{K}$, the natural map
	\[
		\colim\nolimits_{\mbfDelta^{\op}} L_{\ast} \otimes X \to K \otimes X
	\]
	is an equivalence.
\end{itemize}
This can all be expressed compactly (and with full homotopy coherence) in the following.

\begin{dfn}
	A \defn{$\Pyk$-module} is a module over the commutative algebra $\Pyk(\Space)$ in $\Pr^L$.
	A \defn{commutative $\Pyk$-algebra} is an object under $\Pyk(\Space)^{\times}$ in $\CAlg(\Pr^{L,\otimes})$.
\end{dfn}

\begin{exm}
	If $D$ is a presentable category, then $\Pyk(D)$ is a $\Pyk$-module, and if $D^{\otimes}$ is a presentably symmetric monoidal category, then $\Pyk(D)^{\otimes}$ is a $\Pyk$-algebra.
\end{exm}

\begin{nul}
	A $\Pyk$-module structure on a presentable category $ C $ is thus a left adjoint functor $\alpha^{\ast} \colon \Pyk(C) \to C$ along with equivalences
	\[
		\alpha^{\ast} \Gammaupperstar \simeq \id_{C} \andeq \alpha^{\ast}\Delta^{\ast} \simeq \alpha^{\ast}\Pyk(\alpha^{\ast})
	\]
	(and their higher-order analogues), where $\Delta^{\ast} \colon \Pyk(\Pyk(C)) \to \Pyk(C)$ is the pullback along the diagonal $\Comp \to \Comp \times \Comp$.
\end{nul}

Thus a $\Pyk$-module can also be specified by a presentable category $ C $ along with a functor
\[
	C \times \Comp^{\op} \to C \comma \qquad (X, K) \mapsto X^K \comma
\]
along with equivalences $X^\pt \simeq X$ and $X^{(K \times L)} \simeq (X^K)^L$, which plays the rôle of a `continuous product' of $X$ with itself indexed over the points of $K$  subject to the following axioms.
\begin{itemize}
	\item For any compactum $K$ and any small diagram $X \colon I \to C$, the natural map
	\[
		(\textstyle\lim_{i\in I} X_i)^K \to \lim_{i\in I} (X_i^K)
	\]
	is an equivalence.

	\item For any object $X$ of $ C $ and any two compacta $K$ and $L$, the natural map
	\[
		X^{(K \sqcup L)} \to X^K \times X^L
	\]
	is an equivalence.

	\item For any object $X \in C$, any compactum $K$, and any hypercover $\surjto{L_{\ast}}{K}$, the natural map
	\[
		X^K \to \lim\nolimits_{\mbfDelta} X^{L_{\ast}}
	\]
	is an equivalence.
\end{itemize}

\begin{nul}
	Note that if $ C $ is a $\Pyk$-module, then for any object $X$ of $ C $ and any compactum $K$, we obtain morphisms
	\[
		 \coprod_{k\in|K|} X \to K \otimes X \andeq X^K \to \prod_{k\in |K|} X \simeq X^{\indisc}(K) \comma
	\]
	natural in both $X$ and $K$.
	These morphisms are generally not equivalences.
	For example, there exists a small regular cardinal $\kappa$ such that $\alpha^{\ast} \colon \Pyk(C) \to C$ carries $\kappa$-compact objects to $\kappa$-compact objects.
	Thus if $X$ is $\kappa$-compact, so is $K \otimes X$, for \emph{any} compactum $K$;
	this will generally not be true of the coproduct $\coprod_{k\in|K|} X$.
\end{nul}

\begin{nul}
	For any presentable category $ C $, the category $\Pyk(C)$ is the free $\Pyk$-module generated by $ C $.
\end{nul}


\section{Pyknotic objects in algebra \& homotopy theory}


\subsection{Pyknotic spectra \& pyknotic homotopy groups}

In this subsection we investigate the category $ \Pyk(\Sp) $ of pyknotic spectra.
It is a formal matter to see that this agrees with the stabilisation of the category of pyknotic spaces.

\begin{ntn}
	Let $ C $ be a category with pushouts and a terminal object and let $ D $ be a category with finite limits.
	We write
	\begin{equation*}
		\Exc_{*}(C,D) \subset \Fun(C,D)
	\end{equation*}  
	for the full subcategory spanned by the \textit{reduced excisive functors} \HA{Definition}{1.4.2.1}.
\end{ntn}

\begin{nul}
	Let $ B $ be a category with finite products and $ D $ a category with finite limits.
	Then $ \Funcross(B,D) $ admits finite limits, which are computed pointwise.
\end{nul}

We'll record a few facts for future use.
All are immediate from the definitions.

\begin{lem}\label{lem:prodFuncommuteswithFun}
	Let $ B $, $ C $, and $ D $ be categories, and assume that $ B $ and $ D $ have finite products.
	Then the natural equivalence of categories
	\begin{equation*}
		\Fun(B,\Fun(C,D)) \equivalent \Fun(C,\Fun(B,D))
	\end{equation*}
	restricts to an equivalence
	\begin{equation}\label{eq:equivprodpreserve}
		\Funcross(B,\Fun(C,D)) \equivalent \Fun(C,\Fun^{\times}(B,D)) \period
	\end{equation}
\end{lem}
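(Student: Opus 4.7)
The plan is to unwind both sides of the equivalence \eqref{eq:equivprodpreserve} pointwise, using the fact that limits in functor categories are computed objectwise.

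First I would recall the standard fact that since $D$ admits finite products, so does $\Fun(C,D)$, and these are computed objectwise: for any finite family $\{G_i\}_{i \in I}$ in $\Fun(C,D)$, the product $\prod_{i \in I} G_i$ evaluated at $c \in C$ is the product $\prod_{i \in I} G_i(c)$ in $D$. Consequently, a morphism $G \to \prod_{i \in I} G_i$ in $\Fun(C,D)$ is an equivalence if and only if for every $c \in C$, the induced morphism $G(c) \to \prod_{i \in I} G_i(c)$ is an equivalence in $D$.

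Next, given $F \colon B \to \Fun(C,D)$ corresponding under the canonical equivalence $\Fun(B,\Fun(C,D)) \equivalent \Fun(C,\Fun(B,D))$ to $\widetilde{F} \colon C \to \Fun(B,D)$, I would use the identification $\widetilde{F}(c)(b) \equivalent F(b)(c)$. Then for any finite family $\{b_i\}_{i \in I}$ in $B$ with product $b \in B$, the canonical comparison morphism $F(b) \to \prod_{i \in I} F(b_i)$ in $\Fun(C,D)$ evaluates at $c \in C$ to the comparison morphism $\widetilde{F}(c)(b) \to \prod_{i \in I} \widetilde{F}(c)(b_i)$ in $D$. By the objectwise criterion, $F$ preserves this finite product if and only if $\widetilde{F}(c)$ does, for every $c \in C$.

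Running this across all finite products of $B$ shows $F \in \Funcross(B,\Fun(C,D))$ if and only if $\widetilde{F}(c) \in \Funcross(B,D)$ for every $c \in C$, i.e., if and only if $\widetilde{F}$ factors through the full subcategory $\Funcross(B,D) \subseteq \Fun(B,D)$, yielding the restricted equivalence. No part of this is really an obstacle: the entire content is the pointwise computation of limits in functor categories, and the equivalence \eqref{eq:equivprodpreserve} is obtained by restricting the canonical equivalence to the full subcategories on both sides.
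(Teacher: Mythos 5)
Your argument is correct, and it is exactly the unwinding the paper intends: the lemma is stated there with no proof beyond the remark that it is immediate from the definitions, and your pointwise computation of finite products in $\Fun(C,D)$ together with the currying identification $\widetilde{F}(c)(b)\equivalent F(b)(c)$ is precisely that immediate argument. No gap.
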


\begin{exm}
	Let $ C $ and $ D $ be categories, and assume that $ D $ has finite products.
	Then we have a natural equivalence of categories
	\begin{equation*}
		\Pyk(\Fun(C,D)) \equivalent \Fun(C,\Pyk(D)) \period
	\end{equation*}
\end{exm}

\begin{lem}\label{lem:PykandSpcommute}
	Let $ B $, $ C $, and $ D $ be categories.
	Assume that $ B $ has finite products, $ C $ has pushouts and a terminal object, and $ D $ has finite limits.
	Then the natural equivalence of categories \eqref{eq:equivprodpreserve} restricts to an equivalence
	\begin{equation*}
		\Funcross(B,\Exc_{*}(C,D)) \equivalent \Exc_{*}(C,\Fun^{\times}(B,D)) \period
	\end{equation*}
\end{lem}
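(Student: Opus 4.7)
The plan is to use \Cref{lem:prodFuncommuteswithFun} as a black box and show that, under the given equivalence, the reduced-excisive condition on one side corresponds exactly to the reduced-excisive condition on the other. The key fact making this work is that reduced excisiveness, like finite-limit preservation, is a condition that can be checked pointwise whenever the target admits pointwise finite limits.

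First I would record two pointwise-checking principles. Because $D$ has finite limits, finite limits in $\Funcross(B,D)$ are computed pointwise (this is noted immediately before the lemma statement); consequently a functor $G \colon C \to \Funcross(B,D)$ carries the terminal object to the terminal object, respectively pushouts to pullbacks, if and only if for every $b \in B$ the composite $\mathrm{ev}_b \circ G \colon C \to D$ does so. Hence $G$ is reduced excisive iff each $\mathrm{ev}_b \circ G$ is reduced excisive, i.e.\ iff the corresponding bifunctor $B \times C \to D$ is reduced excisive in $C$ for every fixed $b$. Symmetrically, since $\Exc_*(C,D) \subset \Fun(C,D)$ is closed under finite limits (by \HA{Remark}{1.4.2.4}) and finite limits in $\Fun(C,D)$ are pointwise, finite products in $\Exc_*(C,D)$ are pointwise. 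Therefore a functor $H \colon B \to \Exc_*(C,D)$ preserves finite products iff each composite $\mathrm{ev}_c \circ H \colon B \to D$ does, iff the corresponding bifunctor $B \times C \to D$ preserves finite products in $B$ for every fixed $c$.

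Next I would observe that the equivalence of \Cref{lem:prodFuncommuteswithFun} is implemented by the tautological exchange of variables
\[
\Funcross(B,\Fun(C,D)) \equivalent \Fun(C,\Funcross(B,D))
\]
and both sides sit inside $\Fun(B \times C, D)$ as the full subcategory of bifunctors that preserve finite products in $B$ for each fixed $c \in C$. Under this identification, an object of $\Funcross(B, \Exc_*(C,D))$ corresponds, by the two pointwise principles above, to a bifunctor that (i) preserves finite products in $B$ for each fixed $c$, and (ii) is reduced excisive in $C$ for each fixed $b$; and an object of $\Exc_*(C, \Funcross(B,D))$ corresponds to the same pair of conditions. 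Hence the ambient equivalence restricts to the desired equivalence on these full subcategories.

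The main (and really only) obstacle is bookkeeping: confirming that the inclusion $\Exc_*(C,D) \hookrightarrow \Fun(C,D)$ preserves finite products (so that pointwise checking for $H$ is legitimate) and that finite limits in $\Funcross(B,D)$ genuinely are pointwise (so that pointwise checking of excisiveness for $G$ is legitimate). Both are standard and already recorded in the notation paragraph immediately preceding the lemma and in \HA{Remark}{1.4.2.4}, so once they are cited the argument reduces to matching the two conditions on bifunctors as above.
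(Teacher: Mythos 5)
Your argument is correct and is exactly the intended one: the paper records this lemma (together with \Cref{lem:prodFuncommuteswithFun}) as ``immediate from the definitions,'' and your pointwise-checking argument---using that finite limits in $\Funcross(B,D)$ are computed pointwise and that $\Exc_{*}(C,D)\subset\Fun(C,D)$ is closed under finite limits, so both the product-preservation and reduced-excisive conditions become conditions on the bifunctor $B\times C\to D$ in each variable separately---is precisely the spelled-out version of that. No gaps; the only quibble is that the exact Higher Algebra reference for closure of $\Exc_{*}(C,D)$ under limits should be double-checked, but the fact itself is standard.
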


\begin{exm}
	Taking $ B = \Boolcomp $ and $ C $ to be the category $ \Space_{*}^{\fin} $ of finite pointed spaces in \Cref{lem:PykandSpcommute} we deduce that we have an equivalence
	\begin{equation*}
		\Pyk(\Sp(D)) \equivalent \Sp(\Pyk(D))
	\end{equation*}
	natural in categories $ D $ with finite limits (cf. \HA{Definition}{1.4.2.8}).
\end{exm}

\begin{exm}
	\Cref{lem:Pykofpostnikovcomplete} shows that $ \Pyk(\Sp) $ is the stabilisation of a Postnikov complete topos.
\end{exm}

\begin{nul}
	If $ D $ is a $ 1 $-category with finite products, then we have a nautral equivalence of $ 1 $-categories
	\begin{equation*}
		\Pyk(\Ab(D)) \equivalent \Ab(\Pyk(D))
	\end{equation*}
	between pyknotic objects in the category $ \Ab(D) $ of ableian group objects in $ D $ and abelian group objects in $ \Pyk(D) $.
\end{nul}

\begin{ntn}
	For a topos $ \XX $, write $ \PykSp(\XX) \coloneq \Pyk(\Sp(\XX)) $.
	Write
	\begin{equation*}
		 \PykSp_{\geq 0}(\XX) \subset \PykSp(\XX) \andeq \PykSp_{\leq 0}(\XX) \subset \PykSp(\XX)
	\end{equation*}
	for the full subcategories spanned by the connective and coconnective objects, respectively.
\end{ntn}

\begin{prp}\label{prop:tstruncture}
	Let $ \XX $ be a topos.
	Then:
	\begin{enumerate}[{\upshape (\ref*{prop:tstruncture}.1)}]
		\item\label{prop:tstruncture.1} The full subcategories $ (\PykSp_{\geq 0}(\XX),\PykSp_{\leq 0}(\XX)) $ deterine an accessible $ t $-structure on $ \PykSp(\XX) $.

		\item\label{prop:tstruncture.2} The full subcategory $ \PykSp_{\leq 0}(\XX) \subset \PykSp(\XX) $ is closed under filtered colimits.

		\item\label{prop:tstruncture.3} The $ t $-structure on $ \PykSp(\XX) $ is right complete.

		\item\label{prop:tstruncture.4} The functor $ \pi_0 \colon \fromto{\PykSp(\XX)}{\Pyk(\XX)_{\leq 0}} $ determines an equivalence of categories
		\begin{equation*}
			\equivto{\PykSp(\XX)^{\heart}}{\Pyk(\Ab(\XX_{\leq 0}))} \period
		\end{equation*}

		\item\label{prop:tstruncture.5} If, in addition, $ \XX $ is Postnikov complete, then the $ t $-structure on $ \PykSp(\XX) $ is left complete.
	\end{enumerate}
\end{prp}

\begin{proof}
	Items \enumref{prop:tstruncture}{1}--\enumref{prop:tstruncture}{4} follow from \cites[Proposition 1.7]{DAGVII} (see also \SAG{Proposition}{C.5.2.8}).
	\Cref{lem:Pykofpostnikovcomplete} and \cites[Warning 1.8]{DAGVII} imply \enumref{prop:tstruncture}{5}.
\end{proof}

\begin{exm}
	The $ t $-structure on $ \Pyk(\Sp) $ is both left and right complete and the heart $ \Pyk(\Sp)^{\heart} $ is canonically equivalent to the category $ \Pyk(\Ab) $ of pyknotic abelian groups.
	Consequently, the homotopy groups of a pyknotic spectrum are pyknotic abelian groups.
	
	Moreover, since stabilisation is functorial in categories with finite limits and left exact functors, from \Cref{nul:PykSislocal} we get a chain of adjoints
	\begin{equation*}
		\begin{tikzcd}[sep=5.5em]
			\Pyk(\Sp) \arrow[r,"\Gammalowerstar"{xshift=0ex,description}] & \Sp \period \arrow[l, hooked', shift left=1.5ex, "\Gammauppershriek"{xshift=2.5ex,description}] \arrow[l, hooked', shift right=1.5ex, "\Gammaupperstar"{xshift=-2.5ex,description}]
		\end{tikzcd}
	\end{equation*}
	From \cite[Remark 1.9]{DAGVII} we deduce that the functors $ \Gammaupperstar \colon \incto{\Sp}{\Pyk(\Sp)} $ and $ \Gammalowerstar \colon \fromto{\Pyk(\Sp)}{\Sp} $ are $ t $-exact, and the functor $ \Gammauppershriek \colon \incto{\Sp}{\Pyk(\Sp)} $ is left $ t $-exact.
	Also note that the square of right adjoints 
	\begin{equation*}
		\begin{tikzcd}[sep=2em]
			\Pyk(\Sp) \arrow[d,"\Gammalowerstar"'] \arrow[r, "\Omega^{\infty}"] & \Pyk(\Space) \arrow[d,"\Gammalowerstar"] \\ 
			\Sp \arrow[r, "\Omega^{\infty}"'] & \Space 
		\end{tikzcd}
	\end{equation*}
	commutes.
\end{exm}

\begin{exm}
	If $A$ is a pyknotic abelian group, then we also write $A$ for the pyknotic spectrum obtained by composing $A$ with the Eilenberg--Mac Lane functor $\Ab \to \Sp$.
\end{exm}

\begin{nul}
	Stabilising the embedding of profinite spaces into pyknotic spaces (\Cref{exm:profinitespacesembedintoPykS}) we obtain an embedding
	\begin{equation*}
		\incto{\Sp(\Pro(\Spacefin))}{\Pyk(\Sp)} \period
	\end{equation*}
\end{nul}

\begin{nul}
	Let $ C $ be a presentable category.
	By the universal property of the category of proöbjects in $ C $, the discrete functor $ \Gammaupperstar \colon \fromto{C}{\Pyk(C)} $ extneds to a functor $ \fromto{\Pro(C)}{\Pyk(C)} $, which admits a left adjoint $ \Gamma_{!} \colon \fromto{\Pyk(C)}{\Pro(C)} $.
	The materialisation functor $ \mat \colon \fromto{\Pro(C)}{C} $ \SAG{Example}{A.8.1.7} then factors as the composite
	\begin{equation*}
		\begin{tikzcd}[sep=1.5em]
			\Pro(C) \arrow[r] & \Pyk(C) \arrow[r, "\Gammalowerstar"] & C \period
		\end{tikzcd}
	\end{equation*}
\end{nul}

\begin{exm}\label{exm:Enilpotentcomp}
	Let $ E $ be an $ E_1 $-ring spectrum.
	Write $ E^{\tensor \ast} $ for the \emph{Amitsur complex} -- the augmented cosimiplicial spectrum
	\begin{equation*}
		\augcosimplicial{S^0}{E}{E^{\tensor 2}}{E^{\tensor 3}} \period
	\end{equation*}
	For a spectrum $ X $, the \defn{$ E $-nilpotent completion} $ X_{E}^{\wedge} $ is the limit of the $ \Tot $-tower
	\begin{align*}
		X_{E}^{\wedge} &\coloneq \textstyle\lim_{n} \Tot^n(X \tensor_{\Sp} E^{\tensor \ast}) \\
		&\equivalent \textstyle\lim_{\mbfDelta} X \tensor_{\Sp} E^{\tensor \ast} \period
	\end{align*}
	See \cites[\S5]{Hopkins:complexoriented}{Lurie:Chromatic30}[\S2.1]{MR3570153}.
	Regarding the $ \Tot $-tower as a prospectrum and applying the functor $ \fromto{\Pro(\Sp)}{\Pyk(\Sp)} $, we obtain the \defn{pyknotic $E$-nilpotent completion}
	\begin{equation*}
		X_{E}^{\wedge,\pyk} \coloneq \textstyle \lim_n \Tot^n(X \tensor_{\Sp} E^{\tensor \ast})^{\disc} \comma
	\end{equation*}
	which has underlying spectrum the usual $ E $-nilpotent completion $ X_{E}^{\wedge} $.
	Since $ \Gammaupperstar $ does not preserve limits in general, the pyknotic $ E $-nilpotent completion \smash{$ X_{E}^{\wedge,\pyk} $} is generally not discrete.
	Rather, the pyknotic $E$-nilpotent completion is a pyknotic refinment of the $ E $-nilpotent completion $ X_{E}^{\wedge} $.

	Note also that since $ \Gammaupperstar $ preserves finite limits and is symmetric monoidal \Cref{nul:Gammalowerstarsymmetricmonoidal}, we can describe \smash{$ X_{E}^{\wedge,\pyk} $} as the limit
	\begin{equation*}
		X_{E}^{\wedge,\pyk} \equivalent \textstyle \lim_n \Tot^n(X^{\disc} \tensor_{\Pyk(\Sp)} (E^{\disc})^{\tensor \ast}) \period
	\end{equation*}
	Thus the pyknotic $E$-nilpotent completion is the result of forming the $E^{\disc}$-nilpotent completion of $X^{\disc}$ in pyknotic spectra
\end{exm}





\subsection{Pyknotic rings and pyknotic modules}

\begin{dfn}
	A \defn{pyknotic ring} is nothing more than a pyknotic object in the category of rings (which we will usually assume are commutative).
	A \defn{pyknotic module} over a pyknotic ring $R$ is an $R$-module in $\Pyk(\Ab)$.
\end{dfn}

\begin{exm}
	Any normed ring is compactly generated, and so they are pyknotic rings.
	In particular, $\ZZ$, $\QQ$, $\RR$, $\CC$, any local field $E$, any algebraic closure thereof, $\CC_p$, all Banach rings, \textit{\& c.}, are all pyknotic rings in a natural manner.
\end{exm}

\begin{exm}
	For any global field $K$, the adèle group $\AA_K$ is a locally compact hausdorff ring, whence it is a compactly generated ring, whence it is a pyknotic ring.
	More generally, if $S$ is a set, and $\{i_s \colon B_s \to A_s\}_{s \in S}$ is a family of pyknotic ring homomorphisms, then the \defn{restricted product} is the pyknotic ring 
	\[
		\restrictedprod_{s\in S} A_s \coloneq \colim_{W \in P^{\fin}(S)} \prod_{w\in W} A_w \times \prod_{w\in S\smallsetminus W} B_w \comma
	\]
	where $P^{\fin}(S)$ is the poset of finite subsets of $S$.
\end{exm}

\begin{exm}
	Over any normed ring $R$, any first countable (and thus metrisable) topological $R$-module admits a natural pyknotic structure.
\end{exm}

\begin{cnstr}
	Let $A$ be an associative pyknotic ring.
	For example, $A$ may be a topological ring with a compactly generated topology.
	Viewed as a pyknotic spectrum, $A$ has the natural structure of a \defn{pyknotic $E_1$ ring} -- i.e., an $E_1$ algebra in $\Pyk(\Sp)^{\otimes}$.
	If $A$ is commutative, then $A$ is $E_{\infty}$.

	We may therefore define the \defn{pyknotic derived category} $\DD_{\Pyk}(A)$ as the category of left $A$-modules in $\Pyk(\Sp)$.
\end{cnstr}


\subsection{The proétale topos as a \texorpdfstring{$\Pyk$}{Pyk}-algebra}

\begin{ntn}
	For a topos $ \XX $, we write $ \XXcohbdd \subset \XX $ for the full subcategory spanned by the \defn{truncated coherent} objects -- those objects that are both truncated in coherent.
	Recall that if $ \XX $ is a coherent topos, then $ \XXcohbdd $ is a bounded pretopos \SAG{Example}{A.7.4.4}.
\end{ntn}

\begin{cnstr}
	Let $\XX$ be a bounded coherent topos, and let
	\[
		C \coloneq \XX^{\coh}_{<\infty} \subset \XX
	\]
	be the bounded pretopos of truncated coherent objects of $\XX$.
	Form the (small) category $\Pro^{\delta_0}(C)$ of proöbjects relative to $\delta_0$ of $ C $.
	This is the universal category with all tiny inverse limits generated by $ C $.
	The category $ \Pro^{\delta_0}(C) $ is not a pretopos, but the collection $\eff$ of effective epimorphisms endows it with the structure of a presite (\Cref{dfn:presite}).
	Consequently, we may form the hypercomplete, coherent, and locally coherent topos
	\[
		\XX^{\dag} \coloneq \Sheff^{\hyp}(\Pro^{\delta_0}(C))
	\]
	\cite[Propositions \SAGthmlink{A.2.2.2} \& \SAGthmlink{A.3.1.3}]{SAG}.
	We call $\XX^{\dag}$ the \defn{solidification} of the topos $\XX$.
	
\end{cnstr}

\begin{nul}
	By \cite[Corollary 2.8]{Haine:1-localic}, if $ \flowerstar \colon \fromto{\XX}{\YY} $ is a coherent geometric morphism of bounded coherent topoi, then the induced geometric morphism $ \flowerstar^{\dag} \colon \fromto{\XX^{\dag}}{\YY^{\dag}} $ is coherent.
\end{nul}

\begin{exm}\label{exm:effepionProsubcanon}
	From \Cref{cnstr:presitefinitary} it follows that if $ \XX $ is a bounded coherent topos then the effective epimorphism topology on $ \Pro(\XXcohbdd) $ is subcanonical.
	Moreover, since the Yoneda embedding
	\begin{equation*}
		\incto{\Pro(\XXcohbdd)}{\Sheff(\Pro(\XXcohbdd))}
	\end{equation*}
	preserves tiny limits, truncated objects of a topos are hypercomplete, and hypercomplete objects are closed under limits, the Yoneda embedding factors through $ \XX^{\dag} $.
\end{exm}

Our next goal is to show that if $ \XX $ is an \textit{$ n $-localic} coherent topos, then $ \XX^{\dag} $ can be written as hypersheaves on $ \Pro(\XXcoh_{\leq n-1}) $ (\Cref{cor:proembeds}).
To see this, we first need to show that every object of $ \Pro(\XXcohbdd) $ admits an effective epimorphism from an object of $ \Pro(\XXcoh_{\leq n-1}) $.
This requires a number of preliminaries.

\begin{nul}\label{nul:n-truncaedcohbasis}
	Let $ n \geq 1 $ be an integer and let $ \XX $ be a coherent $ n $-localic topos.
	Then by the classification theorem for bounded coherent topoi \SAG{Theorem}{A.7.5.3}, since $ \XX $ is $ n $-localic we have $ \XX \equivalent \Sheff(\XXcoh_{\leq n-1}) $.\footnote{Here we'll only actually use the case $ n = 1 $, which is the content of \cite[2.13]{Haine:1-localic}.}
	Thus $ \XXcoh_{\leq n-1} \subset \XX $ generates $ \XX $ under colimits.
	In particular, for every quasicompact object $ U \in \XX $, there exists an effective epimorphism $ \surjto{\coprod_{i\in I} U_{i}}{U} $ where $ U_i \in \XXcoh_{\leq n-1} $ for each $ i \in I $.
	Since $ U $ is quasicompact, there exists a finite subset $ I_0 \subset I $ such that $ \surjto{\coprod_{i\in I_0} U_{i}}{U} $ is an effective epimorphism.
	Since $ U_i \in \XXcoh_{\leq n-1} $ for each $ i \in I $, we deduce that the finite coproduct $ \coprod_{i\in I_0} U_{i} $ is $ (n-1) $-truncated \HTT{Lemma}{6.4.4.4} and coherent.
	Thus every quasicompact object of $ \XX $ admits an effective epimorphism from a $ (n-1) $-truncated coherent object of $ \XX $.
\end{nul}

Since we must contend with proöbjects, it isn't immediate from \Cref{nul:n-truncaedcohbasis} that every object of $ \Pro(\XXcohbdd) $ admits an effective epimorphism from an object of $ \Pro(\XXcoh_{\leq n-1}) $.
To show this, we'll use the fact that we can always arrange to index a proöbject by a particularly nice poset:

\begin{lem}[{\SAG{Lemma}{E.1.6.4}}]\label{lem:reducetoresidfinite}
	Let $ A' $ be a filtered poset.
	Then there exists a cofinal map of posets $ f \colon \fromto{A}{A'} $, where $ A $ is a filtered poset with the following property:
	\begin{enumerate}
		\item[{\upshape($ \ast $)}]\label{lem:residuallyfinite} For every element $ \alpha \in A $, the set $ \{ \beta \in A \,|\, \beta \leq  \alpha \} $ is finite.
	\end{enumerate}
\end{lem}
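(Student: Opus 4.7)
The plan is to build $ A $ as a poset of finite subsets of $ A' $ with a distinguished maximum, ordered by inclusion, and to take $ f $ to be the function sending a subset to its maximum. I expect the construction itself to be the entire content of the proof: verifying the required properties should be formal once the right definition of $ A $ is in hand.

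More precisely, I would let $ A $ denote the collection of nonempty finite subsets $ S \subseteq A' $ that admit a maximum element in $ A' $, ordered by inclusion, and define $ f \colon \fromto{A}{A'} $ by $ f(S) \coloneq \max(S) $. The function $ f $ is order-preserving: if $ S \subseteq T $ with both in $ A $, then $ \max(S) \in T $, so $ \max(S) \leq \max(T) $. The key calculation is that $ A $ is filtered: given $ S_1, S_2 \in A $, use the filteredness of $ A' $ to choose some $ a' \in A' $ dominating both $ \max(S_1) $ and $ \max(S_2) $, and set $ T \coloneq S_1 \cup S_2 \cup \{a'\} $. Since $ a' $ dominates the maxima of $ S_1 $ and $ S_2 $, it dominates every element of $ S_1 \cup S_2 $, so $ \max(T) = a' $ and $ T \in A $, with $ S_i \subseteq T $.

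For cofinality, given $ a' \in A' $, the singleton $ \{a'\} \in A $ maps to $ a' $, so the image of $ f $ is cofinal in $ A' $; for posets mapping to filtered posets this is enough. Finally, the principal down-set of $ S \in A $ sits inside the finite power set of $ S $, so it is finite, which is the residual finiteness condition ($ \ast $).

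I do not anticipate any serious obstacle: the main point is simply to recognise that adding the constraint \enquote{has a maximum} to finite subsets of $ A' $ is exactly what both forces $ A $ to be filtered (via the trick of adjoining an upper bound produced by filteredness of $ A' $) and guarantees that the assignment $ S \mapsto \max(S) $ is a well-defined, order-preserving, cofinal map. If one wanted to avoid the set-theoretic size issue of quantifying over all finite subsets, one could instead take $ A $ to be any small cofinal sub-poset of this construction, but since $ A' $ is already a set, this is unnecessary.
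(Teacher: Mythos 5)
Your construction is correct: taking $A$ to be the nonempty finite subsets of $A'$ that admit a maximum, ordered by inclusion, with $f(S)=\max(S)$, gives a monotone map that is cofinal (the comma posets $\{S : a'\leq \max S\}$ are nonempty and filtered, so classical cofinality of the image suffices), and principal down-sets in $A$ are finite. The paper itself gives no proof -- it simply cites Lurie's lemma from SAG -- and your argument is essentially the standard one underlying that reference, so there is nothing to fix beyond noting that the relevant filteredness for the cofinality claim is that of the \emph{source} $A$ together with monotonicity, not of the target.
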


\begin{cnstr}
	Let us call a poset $ A $ satisfying (\hyperref[lem:residuallyfinite]{$ \ast $}) \defn{residually finite}.
	If $ A $ is a residually finite poset, then there exists a map of posets $ \rank \colon \fromto{A}{\NN} $ called the \defn{rank} which is determined by the following requirement: $ \rk(\alpha) $ is the smallest natural number not equal to $ \rk(\beta) $ for $ \beta < \alpha $ (cf. \HA{Remark}{A.5.17}).
	In particular, $ \rk(\alpha) = 0 $ if and only if $ \alpha $ is a minimal element of $ A $.
\end{cnstr}

\begin{prp}\label{prp:probasis}
	Let $ n \geq 1 $ be an integer and let $ \XX $ be an $ n $-localic coherent topos.
	Then for every object $ X \in \Pro(\XXcohbdd) $, there exists an effective epimorphism $ \surjto{Y}{X} $ where $ Y \in \Pro(\XXcoh_{\leq n-1}) $.
\end{prp}

\begin{proof}
	Write $ C \coloneq \XXcohbdd $ and $ D \coloneq \XXcoh_{\leq n-1} $.
	Let $ \{X_\alpha\}_{\alpha \in A^{\op}} $ be an object of $ \Pro(C) $, where we without loss of generality assume that $ A $ is a residually finite filtered poset (\Cref{lem:reducetoresidfinite}).
	We construct a morphism $ e \colon \fromto{\{Y_\alpha\}_{\alpha \in A^{\op}}}{\{X_\alpha\}_{\alpha \in A^{\op}}} $ in $ \Pro(C) $ where for each $ \alpha \in A $, each $ e_\alpha \colon \fromto{Y_\alpha}{X_\alpha} $ is an effective epimorphism and $ Y_{\alpha} \in D $.
	We construct this inductively on the rank of elements of $ A $.
	For each $ n \in \NN $, write 
	\begin{equation*}
		A_{\leq  n} \coloneq \{ \alpha \in A \, |\, \rk(\alpha) \leq  n \} \period
	\end{equation*}

	First, for each element $ \alpha \in A $ with $ \rk(\alpha) = 0 $ (i.e., minimal element of $ A $), appealing to \Cref{nul:n-truncaedcohbasis}, choose an effective epimorphism $ e_\alpha \colon \surjto{Y_{\alpha}}{X_{\alpha}} $ where $ Y_\alpha \in D $.
	
	For the induction step, suppose that we have defined a functor $ Y \colon \fromto{A_{\leq  n}^{\op}}{D} $ along with a natural effective epimorphism $ e \colon \surjto{Y}{X|_{A_{\leq  n}}} $; we now extend $ Y $ to $ A_{\leq  n+1} $ as follows.
	For each $ \alpha \in A $ with $ \rank(\alpha) = n+1 $, consider the pulled-back effective epimorphism
	\begin{equation*}
		\surjto{\coprod_{\substack{\beta < \alpha \\ \rank(\beta) = n}} X_{\alpha} \cross_{X_\beta} Y_{\beta}}{X_{\alpha}} \period
	\end{equation*}
	For each $ \beta < \alpha $ with $ \rk(\beta) = n $, appealing to \Cref{nul:n-truncaedcohbasis} we choose an effective epimorphism $ e'_\beta \colon \surjto{Y'_{\beta}}{X_{\alpha} \cross_{X_{\beta}} Y_{\beta}} $, and define the effective epimorphism $ e_{\alpha} \colon \surjto{Y_\alpha}{X_{\alpha}} $ as the composite
	\begin{equation*}
		\begin{tikzcd}
			e_\alpha \colon Y_{\alpha} \coloneq \displaystyle \coprod_{\substack{\beta < \alpha \\ \rank(\beta) = n}} Y'_\beta \arrow[r, ->>, "\coprod_\beta e'_\beta"] & \displaystyle \coprod_{\substack{\beta < \alpha \\ \rank(\beta) = n}} X_{\alpha} \cross_{X_\beta} Y_{\beta} \arrow[r, ->>] & X_\alpha \period
		\end{tikzcd}
	\end{equation*}
	Then by construction the functor $ Y \colon \fromto{A_{\leq  n}^{\op}}{D} $ extends to a functor $ Y \colon \fromto{A_{\leq  n+1}^{\op}}{D} $ equipped with a natural effective epimorphism $ e \colon \surjto{Y}{X|_{A_{\leq  n+1}}} $, as desired.
\end{proof} 

We now prove the desired result using a slight variant of \SAG{Proposition}{A.3.4.2}.

\begin{prp}\label{cor:proembeds}
	Let $ n \geq 1 $ be an integer and let $ \XX $ be an $ n $-localic coherent topos.
	Then restriction of presheaves defines an equivalence
	\begin{equation*}
		\XX^{\dag} \equivalence \Sheff^{\hyp}(\Pro(\XXcoh_{\leq n-1})) 
	\end{equation*}
	with inverse given by right Kan extension.
\end{prp}

\begin{proof}
	Let $ \iupperstar \colon \incto{\Pro(\XXcoh_{\leq n-1})}{\Pro(\XXcoh_{<\infty})} $ denote the inclusion.
	Since $ \Pro(\XXcoh_{\leq n-1}) $ is closed under finite coproducts and finite limits in $ \Pro(\XXcoh_{<\infty}) $, the inclusion $ \iupperstar $ induces a geometric morphism 
	\begin{equation*}
		\ilowerstar \colon \fromto{\XX^{\dag}}{\Sheff^{\hyp}(\Pro(\XXcoh_{\leq n-1}))} \comma 
	\end{equation*} 
	where the right adjoint $ \ilowerstar $ is given by restriction of presheaves \SAG{Proposition}{A.3.3.1}.
	Combining \Cref{prp:probasis} with \cite[\SAGthm{Proposition}{20.4.5.1} \& \SAGthm{Remark}{20.4.5.2}]{SAG} and the hypercompleteness of $ \XX^{\dag} $, we deduce that $ \ilowerstar $ is fully faithful. 
	To complete the proof, it suffices to show that $ \iupperstar $ is fully faithful.
	We do this by showing that $ \ilowerstar $ admits a fully faithful right adjoint $ \iuppershriek $ given by right Kan extension.
	
	For simplicity, we write $ C \coloneq \Pro(\XXcoh_{<\infty}) $ and $ D \coloneq \Pro(\XXcoh_{\leq n-1}) $.
	Let $ F \colon \fromto{D^{\op}}{\Space} $ be a sheaf for the effective epimorphism topology, and let $ \iuppershriek(F) \colon \fromto{C^{\op}}{\Space} $ denote the right Kan extension of $ F $ along the inclusion $ D^{\op} \subset C^{\op} $.
	We claim that $ \iuppershriek(F) $ is a sheaf on $ C $ for the effective epimorphism topology.
	To see this, fix a covering sieve $ S \subset C_{/X} $.
	Set $ D_{/X} \coloneq D \cross_{C} C_{/X} $ and $ T \coloneq D \cross_{C} S $.
	We wish to show that the upper horizontal map in the square
	\begin{equation*}
		\begin{tikzcd}
			\iuppershriek(F)(X) \arrow[r] \arrow[d] & \displaystyle\lim_{X' \in S^{\op}} \iuppershriek(F)(X') \arrow[d] \\
			\displaystyle\lim_{Y \in D_{/X}^{\op}} \iuppershriek(F)(Y) \arrow[r] & \displaystyle\lim_{Y \in T^{\op}} \iuppershriek(F)(Y)
		\end{tikzcd}
	\end{equation*}
	is an equivalence.
	The vertical maps are equivalences because $ \iuppershriek(F) $ is the right Kan extension of $ F $.
	The lower horizontal map is an equivalence because $ F $ is a sheaf on $ D $ and every object of $ C $ is covered by an object of $ D $ (\Cref{prp:probasis}). 
	Thus right Kan extension of presheaves restricts to a fully faithful functor
	\begin{equation*}
		\iuppershriek \colon \incto{\Sheff(\Pro(\XXcoh_{\leq n-1}))}{\Sheff(\Pro(\XXcoh_{<\infty}))}
	\end{equation*}
	which is right adjoint to restriction of presheaves.
	Since the image of a hypercomplete sheaf under the pushforward in a geometric morphism is hypercomplete, the restriction of $ \iuppershriek $ to hypercomplete sheaves defines a fully faithful right adjoint to $ \ilowerstar $, as desired.
\end{proof}

\begin{exm}\label{exm:profinitespacesembedintoPykS}
	Combining \Cref{wrn:PykS,cor:PykSashypercomp} with \Cref{cor:proembeds} shows that $ \Pyk(\Space) \equivalent \Space^{\dag} $ and provides a fully faithful embedding 
	\begin{equation*}
		\incto{\Pro(\Spacefin)}{\Pyk(\Space)} \period
	\end{equation*}
	In particular, the solidification $\XX^{\dag}$ of a bounded topos is naturally a $\Pyk$-algebra.
\end{exm}

\begin{exm}\label{exm:proetasPykalg}
	Combining \Cref{cor:proembeds} with \cite[Example 7.1.7]{Ultracategories} we see that that solidification of the étale topos $X_{\et}$ of a coherent scheme $X$ is the hypercompletion of the proétale topos $X_{\proet}$ of Bhatt and Scholze \cite{BhattScholzeProEtale}.
\end{exm}

\begin{wrn}
	In general, the solidification $\XX^{\dag}$ of a bounded coherent topos $\XX$ does \textit{not} coincide with its pyknotification $ \Pyk(\XX) $.
\end{wrn}




\section{Pyknotic categories}


\subsection{Pyknotic categories}

\begin{dfn}
	A \defn{pyknotic category} is a pyknotic object in $\Cat_{\delta}$ for some inaccessible cardinal $\delta$.
	A \defn{pyknotic functor} is a morphism of $\Pyk(\Cat_{\delta})$.

	Similarly, a \defn{pseudopyknotic category} is a pseudopyknotic object in $\Cat_{\delta}$ for some inaccessible cardinal $\delta$.
	A \defn{pseudopyknotic functor} is a morphism of $ \PsiPyk(\Cat_{\delta}) $.
\end{dfn}

\begin{nul}
	The inclusion $\Space_{\delta} \inclusion \Cat_{\delta}$ induces a fully faithful functor $\Pyk(\Space_{\delta}) \inclusion \Pyk(\Cat_{\delta})$.
	Write $ H \colon \fromto{\Cat_{\delta}}{\Space_{\delta}} $ for the left adjoint to the inclusion $ \incto{\Space_{\delta}}{\Cat_{\delta}} $, and $ \iota \colon \fromto{\Cat_{\delta}}{\Space_{\delta}} $ for its right adjoint.
	Then $ H(C) $ is the \textit{classifying space} obtained by inverting every morphism in $ C $, and $ \iota C \subset C $ is the \textit{interor} or maximal subgroupoid contained in $ C $.
	Since $ H $ and $ \iota $ both preserve finite products, composition with $ H $ and $ \iota $ define functors
	\begin{equation*}
		H,\iota \colon \fromto{\Pyk(\Cat_{\delta})}{\Pyk(\Space_{\delta})}
	\end{equation*}
	which are left and right adjoint to the inclusion $ \incto{\Pyk(\Space_{\delta})}{\Pyk(\Cat_{\delta})} $, respectively.
	These are the formations of the \defn{classifying pyknotic space} and the \defn{interior pyknotic space} of a pyknotic category.
\end{nul}

\begin{nul}
	The formation of the \defn{opposite (pseudo)pyknotic category} to a (pseudo)pyknotic category is performed objectwise.
\end{nul}

\begin{cnstr}
	If $ C $ is a $\Pyk$-module, then $ C $ acquires a natural pyknotic structure in the following manner.
	Let us abuse notation slightly and write $ C $ for the pyknotic category $\Stonean^{\op} \to \Pr^L$ given by
	\[
		C(K) \coloneq C \otimes_{\Pyk(\Space)} \Pyk(\Space)_{/K} \period
	\]
	The category underlying this pyknotic category $ C $ is indeed our original $ C $.
	Please observe also that if $K$ and $L$ are Stonean topological spaces, the natural morphism
	\[
		\begin{aligned}
    		C(K \sqcup L) &= C \otimes_{\Pyk(\Space)} \Pyk(\Space)_{/(K \sqcup L)} \\
    		&\simeq C \otimes_{\Pyk(\Space)} (\Pyk(\Space)_{/K} \oplus \Pyk(\Space)_{/L}) \\
    		&\to (C \otimes_{\Pyk(\Space)} \Pyk(\Space)_{/K}) \oplus (C \otimes_{\Pyk(\Space)} \Pyk(\Space)_{/L}) \\
    		&\simeq C(K) \times C(L)
  		\end{aligned}
	\]
	is an equivalence, so $ C $ is indeed a pyknotic category.

	In particular, if $f_{\ast} \colon \XX \to \Pyk(\Space)$ is a geometric morphism, then as a pyknotic category, $\XX$ carries a Stonean topological space $K$ to the fibre product of topoi
	\[
		\XX(K) \simeq \XX \times_{\Pyk(\Space)} \Pyk(\Space)_{/K} \period
	\]
\end{cnstr}

\begin{cnstr}\label{cnstr:twistedarrow}
	Let $ C $ be a pyknotic category.
	Composing $ C $ with the twisted arrow functor $\twarr \colon \Cat_{\delta} \to \Cat_{\delta}$ provides a \defn{twisted arrow pyknotic category} $\twarr(C)$ with its objectwise left fibration $\twarr(C) \to C^{\op} \times C$.
	Armed with this, we obtain a pyknotic mapping space functor
	\[
		\Map_C \colon C^{\op} \times C \to \Pyk(\Space)
	\]
	such that for any Stonean topological space $K$ and any pair of objects $X$ and $Y$ in $C(K)$, the sheaf $\Map_C(X,Y)(K)$ on $\Stonean_{/K}$ carries $f\colon L \to K$ to the space $\Map_{C(L)}(f^{\ast}X, f^{\ast}Y)$.
\end{cnstr}

\begin{exm}
	Let $\{C_{\alpha}\}_{\alpha\in\Lambda^{\op}}$ be an inverse system of categories.
	The limit
	\[
		C \coloneq \lim_{\alpha\in\Lambda^{\op}} C_{\alpha}^{\disc}
	\]
	of pyknotic categories is generally not discrete.
	The interior pyknotic space of $C$ is the limit of the discrete interiors $(\iota C_{\alpha})^{\disc}$, but the classifying pyknotic space $H(C)$ is not prodiscrete.
\end{exm}

\begin{exm}
	A \defn{stable pyknotic category} is a pyknotic object in the category $\Cat^{\stable}_{\delta}$ of ($\delta$-small) stable categories and exact functors.

	Since mapping spaces in pyknotic categories have natural pyknotic structures (\Cref{cnstr:twistedarrow}), it follows that the $\Ext$ groups in a stable pyknotic category admit the structure of pyknotic abelian groups.
	That is, if $A$ is a stable pyknotic category, then one may define, for any $n\in \ZZ$, the pyknotic abelian group
	\[
		\Ext^n_A(X, Y) \coloneq \pi_0\Map_A(X[-n], Y) \period
	\]

	The category $\Pyk(\Sp)$ of pyknotic spectra is naturally a $\Pyk$-algebra, and so for any module $A$ in $\Pr^L$ over $\Pyk(\Sp)$, the associated pyknotic category is a stable pyknotic category.
	In particular, for any pyknotic ring $R$, the pyknotic derived category $\DD_{\Pyk}(R)$ has the natural structure of a stable pyknotic category.
\end{exm}


\subsection{Pyknotic categories and complete Segal pyknotic spaces}

\begin{ntn}
	Let $ D $ be a category with finite limits.
	Write 
	\begin{equation*}
		\CS(D) \subset \Fun(\mbfDelta^{\op},D)
	\end{equation*}	
	for the full subcategory spanned by the \defn{complete Segal objects}, that is, those functors $ F \colon \fromto{\mbfDelta^{\op}}{D} $ satisfying the following conditions:
	\begin{itemize}
		\item For every $ m \in \mbfDelta $, the natural morphism
		\begin{equation*}
			\fromto{F_m}{F\{0,1\} \cross_{F\{1\}} F\{1,2\} \cross_{F\{2\}} \cdots \cross_{F\{m-1\}} F\{m-1,m\}}
		\end{equation*}
		is an equivalence in $ D $.

		\item The natural morphism
		\begin{equation*}
			\fromto{F_0}{F_3 \cross_{F\{0,2\} \cross F\{1,3\}} F_0}
		\end{equation*}
		is an equivalence in $ D $.
	\end{itemize}
\end{ntn}

\begin{nul}
	Joyal and Tierney \cite{JT} showed that the nerve construction defines an equivalence 
	\begin{equation*}
		N \colon \equivto{\Cat}{\CS(\Space)}
	\end{equation*}
	from the category of categories to the category of complete Segal spaces.
\end{nul}

From \Cref{lem:prodFuncommuteswithFun} we immediately deduce:

\begin{lem}\label{lem:pykCS}
	Let $ B $ be a category with products and $ D $ a category with finite limits.
	Then the natural equivalence of categories 
	\begin{equation*}
		\Funcross(B,\Fun(\mbfDelta^{\op},D)) \equivalent \Fun(\mbfDelta^{\op},\Fun^{\times}(B,D))
	\end{equation*}
	restricts to an equivalence
	\begin{equation*}
		\Funcross(B,\CS(D)) \equivalent \CS(\Funcross(B,D)) \period
	\end{equation*}
\end{lem}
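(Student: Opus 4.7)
The plan is to reduce both complete Segal conditions to the same pointwise statement in $D$, using the fact that finite limits in $\Funcross(B,D)$ are computed objectwise (as noted just before \Cref{lem:prodFuncommuteswithFun}).

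First, I would observe that the Segal and completeness conditions defining $\CS(-) \subset \Fun(\mbfDelta^{\op}, -)$ are equivalences between certain finite limits of terms of the simplicial diagram. Since $\Funcross(B,D)$ inherits finite limits from $\Fun(B,D)$, and these are computed pointwise in $B$, a simplicial object
\[
	G \colon \mbfDelta^{\op} \to \Funcross(B,D)
\]
is a complete Segal object if and only if for every object $b \in B$, the simplicial object $G(-)(b) \colon \mbfDelta^{\op} \to D$ obtained by evaluation at $b$ lies in $\CS(D)$. Indeed, equivalences in $\Funcross(B,D)$ are detected objectwise, and the finite limits appearing in the Segal and completeness conditions are preserved by evaluation at $b$.

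Next, under the natural equivalence
\[
	\Funcross(B,\Fun(\mbfDelta^{\op},D)) \equivalent \Fun(\mbfDelta^{\op},\Funcross(B,D))
\]
supplied by \Cref{lem:prodFuncommuteswithFun}, a functor $F$ on the left corresponds to a simplicial object $\widetilde{F}$ on the right satisfying $\widetilde{F}_n(b) \simeq F(b)_n$ for each $b \in B$ and $n \in \mbfDelta$. Thus the pointwise simplicial object $\widetilde{F}(-)(b) \colon \mbfDelta^{\op} \to D$ is precisely $F(b)$. By the observation of the previous paragraph, $\widetilde{F}$ lies in $\CS(\Funcross(B,D))$ if and only if $F(b)$ lies in $\CS(D)$ for every $b \in B$, which is exactly the condition that $F$ factors through the full subcategory $\CS(D) \subseteq \Fun(\mbfDelta^{\op},D)$.

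Hence the equivalence of \Cref{lem:prodFuncommuteswithFun} restricts to an equivalence on the full subcategories, as claimed. There is no real obstacle here: the only subtlety is verifying that objectwise detection of the Segal and completeness conditions is legitimate, and this follows formally from the pointwise computation of finite limits in $\Funcross(B,D)$.
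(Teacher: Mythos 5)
Your proof is correct and follows the same route as the paper, which simply deduces the lemma immediately from \Cref{lem:prodFuncommuteswithFun}; your argument just spells out the implicit justification (finite limits in $\Funcross(B,D)$ are computed pointwise and equivalences are detected objectwise, so the Segal and completeness conditions correspond on both sides).
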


\begin{exm}
	\Cref{lem:pykCS} provides an equivalence
	\begin{equation*}
		\Pyk(\Cat) \equivalent \CS(\Pyk(\Space)) \period
	\end{equation*}
\end{exm}

\begin{nul}
	Similarly, we have an equivalence
	\begin{equation*}
		\PsiPyk(\Cat) \equivalent \CS(\PsiPyk(\Space)) 
	\end{equation*}
	between pseudopyknotic categories and complete Segal objects in pseudopyknotic spaces.
\end{nul}


\subsection{Ultracategories as pseudopyknotic categories}

In recent work \cite{Ultracategories}, Lurie studied $ 1 $-categories equipped with an \textit{ultrastructure}, which we simply refer to as \textit{$ 1 $-ultracategories}\footnote{Here we still follow our categorical conventions and use the term `$ 1 $-ultracategory' to refer to what Lurie calls an `ultracategory' in \cite{Ultracategories}, and use the term `ultracategory' for the higher-categorical notion.}.
An ultracategory structure on a $ 1 $-category $ M $ consists of, for each set $ S $ and ultrafilter $ \mu \in \beta(S) $, an \textit{ultraproduct} functor
\begin{equation*}
	\int_{S} (-)d\mu \colon \fromto{\prod_{s \in S} M}{M} \comma
\end{equation*}
along with data relating these ultraproduct functors suggested by the integral notation, all subject to a number of coherence axioms \cite[Definition 1.3.1]{Ultracategories}. 
The primary example of a $ 1 $-ultracategory is the following:

\begin{exm}[{\cite[Example 1.3.8]{Ultracategories}}]
	Let $ M $ be a $ 1 $-category with products and filtered colimits.
	Then $ M $ has an ultrastructure where for each set $ S $ and ultrafilter $ \mu \in \beta(S) $, the ultraproduct $ \int_S (-) d\mu $ is defined by the usual ultraproduct formula
	\begin{equation}\label{eq:catultraprod}
		\int_{S} m_s d\mu \coloneq \colim_{S_0 \in \mu} \prod_{s \in S_0} m_s \comma
	\end{equation}
	where the colimit is taken over the filtered diagram of all subsets $ S_0 \subset S $ in the ultrafilter $ \mu $.
	This ultrastructure is called the \textit{categorical ultrastructure} on $ M $.

	More generally, if $ M' \subset M $ is a full subcategory closed under ultraproducts in $ M $ (defined by \cref{eq:catultraprod}), then the categorical ultrastructure on $ M $ restructs to an ultrastructure on $ M' $.
	In fact, every $ 1 $-ultracategory can be obtained in this way; see \cite[\S8]{Ultracategories}.
\end{exm}

\begin{rec}
	Let $ \XX $ be a $ 1 $-topos.
	The \defn{category of points} of $ \XX $ is the category $ \Pt(\XX) \coloneq \Fun^{\ast}(\XX,\Set) $ of left exact left adjoints $ \fupperstar \colon \fromto{\XX}{\Set} $ and natural transformations between them.
	If $ \XX $ is a coherent $ 1 $-topos, then restriction along the inclusion $ \incto{\XXcoh}{\XX} $ of coherent objects defines a fully faithful functor
	\begin{equation*}
		\incto{\Pt(\XX)}{\Fun(\XXcoh,\Set)}
	\end{equation*}
	with essential image the \defn{pretopos morphisms}, i.e., those functors $ \fromto{\XXcoh}{\Set} $ that preserve finite limits, finite coproducts, and effective epimorphisms.
\end{rec}

\begin{exm}\label{exm:ultrastronPtX}
	If $ \XX $ is a coherent $ 1 $-topos, then by the Łoś ultraproduct theorem \cite[Theorem 2.1.1]{Ultracategories} and the equivalence between coherent $ 1 $-topoi and $ 1 $-pretopoi \cite[Proposition C.6.4]{Ultracategories}, the cateory of points $ \Pt(\XX) $ is closed under ultraproducts in $ \Fun(\XX^{\coh},\Set) $, hence admits an ultrastructure.
\end{exm}

If $ M $ and $ N $ are $ 1 $-ultracategories, a \defn{left ultrastrucature} on a functor $ F \colon \fromto{M}{N} $ consists of comparison natural transformation of ultraproducts
\begin{equation}\label{eq:leftultcomparison}
	\fromto{F\paren{\textstyle\int_{S} (-)d\mu}}{\textstyle\int_{S} F(-) d\mu} 
\end{equation} 
for each set $ S $ and ultrafilter $ \mu \in S $, subject to a number of coherences \cite[Definition 1.4.1]{Ultracategories}.
A left ultrafunctor is an \defn{ultrafunctor} if all of the comparison morphisms \eqref{eq:leftultcomparison} are equivalences.
Then $ 1 $-ultracategories and left ultrafunctors between them assemble into a $ 2 $-category $ \UltL_1 $.
The $ 2 $-category $ \UltL_1 $ embeds into pseudopyknotic $ 1 $-categories in the following manner.
First, writing $ \USet \subset \UltL_1 $ for the full subcategory spanned by those $ 1 $-ultracategories whose underlying $ 1 $-category is discrete, there is an equivalence of $ 1 $-categories
\begin{equation*}
	\equivto{\USet}{\Comp}
\end{equation*}
\cite[Theorem 3.1.5]{Ultracategories}.
Moreover, in \cite[\S4]{Ultracategories} Lurie proves that for any $ 1 $-ultracategory $ M $, the functor
\begin{equation*}
	\Funlult(-,M) \colon \fromto{\Comp^{\op} \equivalent \USet^{\op}}{\Cat_{1}}
\end{equation*}
given by sending a compactum $ K $ to the $ 1 $-category $ \Funlult(K,M) $ of left ultrafunctors $ \fromto{K}{M} $ defines a stack of $ 1 $-categories on $ \Comp $ with respect to the effective epimorphism topology.
Moreover, the construction
\begin{equation*}
	\fromto{\UltL_1}{\PsiPyk(\Cat_{1})} \comma \qquad \goesto{M}{\Funlult(-,M)}
\end{equation*}
defines a fully faithful embedding.\footnote{Ultracategories and left ultrafunctors really form a $ (2,2) $-category, and ultracategories and left ultrafunctors embed fully faithfully into the $ (2,2) $-category of $ 1 $-categories, functors, and natural transformations.}

The main motivation of the study of $ 1 $-ultracategories is the following result, which implies both the Deligne Completeness Theorem and Makkai's Strong Conecptual Completeness Theorem \cite{MR900266}:

\begin{thm}[{\cite[Theorem 2.2.2]{Ultracategories}}]
	Let $ \XX $ be a coherent $ 1 $-topos.
	Then there is a natural equivalence of categories
	\begin{equation*}
		\equivto{\Funlult(\Pt(\XX),\Set)}{\XX} \comma
	\end{equation*}	
	where $ \Set $ is given the categorical ultrastructure and $ \Pt(\XX) $ is given the ultrastructure of \Cref{exm:ultrastronPtX}.
\end{thm}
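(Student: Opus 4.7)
The plan is to construct an explicit comparison functor
\[
\Phi \colon \XX \to \Funlult(\Pt(\XX), \Set), \qquad U \mapsto \bigl(p \mapsto \pupperstar(U)\bigr),
\]
and show that it is an equivalence. Endowing $\Phi(U)$ with a left ultrastructure means specifying, for every set $S$, every ultrafilter $\mu \in \beta(S)$, and every family $\{p_s\}_{s\in S}$ of points of $\XX$, a comparison map
\[
\Phi(U)\!\left(\textstyle\int_S p_s\, d\mu\right) \to \textstyle\int_S \Phi(U)(p_s)\, d\mu
\]
in $\Set$. By the ultrastructure on $\Pt(\XX)$ (\Cref{exm:ultrastronPtX}), $\textstyle\int_S p_s\, d\mu$ is computed by the categorical formula applied in $\Fun(\XXcoh,\Set)$, so the required map is just the canonical comparison between $\paren{\int p_s d\mu}^*(U)$ and $\int p_s^*(U)\, d\mu$. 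The coherence axioms for a left ultrafunctor reduce to coherences already present in the ultrastructure on $\Pt(\XX)$, so $\Phi$ is well-defined and clearly natural in $\XX$.

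Next I would reduce to the coherent objects, using the equivalence $\XX \simeq \Sheff(\XXcoh)$ from the classification of coherent $1$-topoi, and verify that on $\XXcoh$ the comparison functor is fully faithful. Faithfulness is essentially the higher-categorical Deligne completeness theorem: if $f,g\colon U \to V$ induce the same map of sets at every point, they agree, because $\XX$ has enough points. Fullness on $\XXcoh$ reduces to the same statement applied to the graph of a natural transformation $\Phi(U) \to \Phi(V)$: the graph is a subobject of $\Phi(U \times V)$ that is compatible with ultraproducts, so by full faithfulness at the level of subobjects (which itself follows from Deligne), it is the image of a subobject of $U \times V$; one then checks this subobject is the graph of a morphism. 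Passing from $\XXcoh$ to all of $\XX$ by ind-completion gives full faithfulness of $\Phi$.

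The main obstacle is essential surjectivity: given a left ultrafunctor $F \colon \Pt(\XX) \to \Set$, I must produce an object $U_F \in \XX$ with $\pupperstar(U_F) \equivalent F(p)$ naturally in $p$. This is where Makkai's strong conceptual completeness enters. The strategy is:
\begin{enumerate}
\item Approximate $F$ by a filtered colimit of representables. Because $\Pt(\XX)$ has an ultrastructure, one can single out a canonical such presentation using the compatibility of $F$ with ultraproducts, which forces the indexing to respect the ultrafilter monad.
\item Show that the value of a left ultrafunctor is determined by its restriction to any sub-ultracategory of $\Pt(\XX)$ that is ``large enough'' in the precise sense of \cite[\S8]{Ultracategories}; combined with the previous step, this produces a canonical coherent sheaf on $\XXcoh$.
\item Descend along the identification between coherent $1$-topoi and pretopoi (via $\XXcoh$) to realise the desired object $U_F$.
\end{enumerate}
This essential-surjectivity step is the heart of the argument and the place where both the compactness/Hausdorffness of the ambient ultracategory $\Set$ and the pretopos structure of $\XXcoh$ are used in full.

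Finally, naturality in $\XX$ follows from the naturality of all constructions along coherent geometric morphisms $\XX \to \YY$, since a coherent geometric morphism induces a map $\Pt(\YY) \to \Pt(\XX)$ of $1$-ultracategories in the reverse direction, and $\Phi$ is built out of pullback along points.
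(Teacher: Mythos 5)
First, note that the paper does not prove this statement at all: it is quoted verbatim from Lurie's \emph{Ultracategories} (Theorem 2.2.2), so the only meaningful comparison is with Lurie's argument, which occupies several sections of that paper (passing through the equivalence between ultrasets and compacta, the identification of left ultrafunctors out of a compactum with sheaves on it, and a descent argument for the stack $K \mapsto \Funlult(K,\Set)$). Your set-up of the comparison functor $U \mapsto (p \mapsto \pupperstar(U))$, with its lax compatibility with ultraproducts coming from the categorical ultrastructure on $\Pt(\XX) \subset \Fun(\XXcoh,\Set)$, is correct and matches the evaluation functor Lurie uses, and faithfulness does indeed follow from $\XX$ having enough points.

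However, two essential steps in your sketch are not proofs. For fullness, you assert that the graph of a natural transformation of left ultrafunctors is induced by a subobject of $U \times V$ ``by full faithfulness at the level of subobjects (which itself follows from Deligne)''. This does not follow from Deligne's theorem: enough points gives conservativity and faithfulness, i.e.\ that points jointly detect morphisms and isomorphisms already present in $\XXcoh$, but it does not say that a family of subsets $\Gamma_p \subset \pupperstar(U\times V)$, compatible with ultraproducts and natural in $p$, is realised by an actual subobject. That definability statement is precisely (a special case of) the theorem you are trying to prove, and establishing it requires the genuinely nontrivial input (Ło\'{s}, compactness, and the sheaf-theoretic analysis of left ultrafunctors) that Lurie supplies; as written, your fullness argument is circular. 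For essential surjectivity you explicitly defer the content: the three-step plan (a ``canonical'' presentation of a left ultrafunctor as a filtered colimit of representables ``forced'' by the ultrafilter monad, restriction to a ``large enough'' sub-ultracategory, descent to $\XXcoh$) is not substantiated, and the first step in particular has no justification — a left ultrafunctor $\Pt(\XX) \to \Set$ does not come with any evident canonical representable presentation, and compatibility with ultraproducts does not by itself produce one. So while the frame of your argument (comparison functor, full faithfulness, essential surjectivity) is reasonable, the heart of the theorem — definability of subobjects and essential surjectivity — is missing, and cannot be patched by citing Deligne completeness.
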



The `explicit' definition of an $ 1 $-ultracategory as a $ 1 $-category with ultraproduct functors subject to a collection of coherences isn't well-suited to generalise to the higher-categorical setting.
As such, we provide a different description of ultracategories following \cite[\S8]{Ultracategories}; this material will appear in \cite{Kerodon}, so we do not provide proofs here.

\begin{dfn}
	Let $ E $ be a category with finite products.
	An object $ X \in E $ with finite products is \defn{coconnected} if $ \Map_{E}(-,X) \colon \fromto{E^{\op}}{\Space} $ carries finite products in $ E $ to finite coproducts in $ \Space $.
\end{dfn}

\begin{dfn}[{\cite[Definition 8.2.2]{Ultracategories}}]\label{def:ultracatenvelope}
	An \defn{ultracategory envelope} is a category $ E $ satisfying the following conditions:
	\begin{enumerate}[(\ref*{def:ultracatenvelope}.1)]
		\item The category $ E $ admits products.

		\item Every object $ X \in E $ can be written as a product $ \prod_{s \in S} X_s $, where each factor $ X_s $ is a coconnected object of $ E $.

		\item The full subcategory $ E^{\cc} \subset E $ spanned by the coconnected objects has ultraproducts in $ E $.
		That is, for every collection $ \{X_s\}_{s \in S} $ of coconnected objects of $ E $ and every ultrafilter $ \mu $ on $ S $, the filtered colimit
		\begin{equation*}
			\colim_{S_0 \in \mu} \prod_{s \in S_0} m_s \comma
		\end{equation*}
		exists and is a coconnected object of $ E $.
	\end{enumerate}
\end{dfn}

\begin{dfn}\label{def:ultracatsviaenv}
	Let $ M $ be a category.
	An \defn{ultracategory structure} on $ M $ consists of an ultracategory envelope $ \Env(M) $ along with an equivalence of categories $ \equivto{M}{\Env(M)^{\cc}} $.
\end{dfn}

\begin{nul}
	Lurie shows \cite[Theorem 8.2.5]{Ultracategories} that the theory of $ 1 $-ultracategories in the sense of \Cref{def:ultracatsviaenv} coincides with the `explicit' theory of ultracategories (in the sense of \cite[Definition 1.3.1]{Ultracategories}).
\end{nul}

\begin{exm}[{\cite[Example 8.4.3]{Ultracategories}}]
	Let $ \XX $ be a bounded coherent topos.
	Then again, restriction along the inclusion $ \incto{\XXcohbdd}{\XX} $ defines an embedding
	\begin{equation*}
		\incto{\Pt(\XX)}{\Fun(\XXcohbdd,\Space)} 
	\end{equation*}
	\SAG{Proposition}{A.6.4.4}.
	Write $ \Env(\Pt(\XX)) \subset \Fun(\XXcohbdd,\Space) $ for the smallest full subcategory containing $ \Pt(\XX) $ and closed under small products.
	Then $ \Env(\Pt(\XX)) $ is an ultracategory envelope and the inclusion $ \Pt(\XX) \subset \Env(\Pt(\XX)) $ provides an ultrastructure on $ \Pt(\XX) $.
\end{exm}

More natural from the ultracategory envelope perspective are \defn{right ultrafunctors} -- morphisms of the ultracategory envelopes that preserve products and coconnected objects \cite[\S8.2]{Ultracategories}.
In terms of the explicit definition of $ 1 $-ultracategories, right ultrafunctors are just like left ultrafunctors, but the ultraproduct comparisons \eqref{eq:leftultcomparison} go in the opposite direction \cite[Definition 8.1.1]{Ultracategories}.
From the ultracategory envelope perspective, defining left ultrafunctors is more involved, but the upshot is that there's still a fully faithful embedding into pseudopyknotic categories:

\begin{thm}
	There is a fully faithful embedding
	\begin{equation*}
		\incto{\UltL}{\PsiPyk(\Cat)}
	\end{equation*}
	from a category of ultracategories and left ultrafunctors between them to pseudopyknotic categories.
\end{thm}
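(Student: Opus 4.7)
The plan is to mimic verbatim the $1$-categorical construction already described in this subsection, producing for each ultracategory $M$ the functor $\Funlult(-,M)\colon \Comp^{\op} \to \Cat$ that sends a compactum $K$ to the $\infty$-category of left ultrafunctors $K \to M$. To even write this down we first need to view a compactum as an $\infty$-ultracategory; using the envelope formalism of \Cref{def:ultracatenvelope}, a compactum $K$ carries a canonical ultrastructure in which the underlying $\infty$-category is the discrete space $K$, and in which the ultraproduct $\int_S k_s\, d\mu$ of a family $k\colon S \to K$ along $\mu \in \beta(S)$ is the image of $\mu$ under the unique extension $\beta(S) \to K$. An extension of \cite[Theorem 3.1.5]{Ultracategories} to the higher-categorical setting identifies $\Comp$ with the full subcategory of $\UltL$ spanned by those ultracategories whose underlying $\infty$-category is a space.

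Next I would verify that $\Funlult(-,M)$ is a pseudopyknotic category, i.e., a sheaf on $\Comp$ for the effective epimorphism topology. Preservation of finite products is formal, since a left ultrafunctor out of $K_1 \sqcup K_2$ decomposes canonically (every ultrafilter on $S_1 \sqcup S_2$ concentrates on one side). For descent along an effective epimorphism $p\colon L \to K$ of compacta, I would use \Cref{nul:descriptionsofPykS} to reduce to the case where $L$ is Stonean, and then argue that $K$ is the colimit in $\UltL$ of the \v{C}ech nerve of $p$; this is the ultracategorical upgrade of the statement that $K$ is the coequaliser in $\Comp$ of $L \times_K L \rightrightarrows L$, and it transports descent for $\Funlult(-,M)$ from descent in $\Comp$.

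Finally, full faithfulness is obtained by recovering $M$ from $F = \Funlult(-,M)$ explicitly. The underlying $\infty$-category of $M$ is $F(\pt)$, while the ultrastructure on $F(\pt)$ is reconstructed from the values of $F$ on Stone--\v{C}ech compactifications: for a set $S$, the compactum $\beta(S)$ represents $S$-indexed families of objects of $M$, and for each ultrafilter $\mu \in \beta(S)$, regarded as a morphism $\mu\colon \pt \to \beta(S)$ in $\Comp$, the functor $F(\mu)$ computes the ultraproduct along $\mu$. All coherences demanded of a left ultrafunctor are then encoded in functoriality of $F$ on $\Comp$, and composition of left ultrafunctors is captured by the $\Cat$-valued sheaf structure on $\Funlult(-,M)$.

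The principal obstacle is the clean setup of the higher-categorical theory of ultracategory envelopes and of left ultrafunctors: the $1$-categorical definition in \cite[Definition 1.4.1]{Ultracategories} is already an intricate web of coherences, and its homotopy coherent analogue is non-trivial to formulate. This is precisely the material that will appear in \cite{Kerodon}; once it is in place, the three steps above run parallel to Lurie's $1$-categorical proof.
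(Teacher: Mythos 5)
Your proposal follows essentially the same route as the paper: the paper states this theorem without proof, explicitly deferring the higher-categorical theory of ultracategory envelopes and left ultrafunctors to \cite{Kerodon}, and the intended argument is exactly the one you outline, namely the generalisation of Lurie's $1$-categorical construction $M \mapsto \Funlult(-,M)$ (the identification $\USet \simeq \Comp$ from \cite[Theorem 3.1.5]{Ultracategories} and the stack property from \cite[\S 4]{Ultracategories}). Like the paper, your sketch leaves the substantive content -- the coherent higher-categorical definitions, the descent argument, and full faithfulness on mapping categories (which requires recovering left ultrafunctors and their coherences from morphisms of pseudopyknotic categories, not merely recovering the object $M$) -- to the forthcoming foundational material.
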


\begin{exm}\label{exm:exodromyaspyknotic}
	The assignment $ \goesto{\XX}{\Pt(\XX)} $ defines a fully faithful functor from bounded coherent topoi and \textit{arbitrary} geometric morphisms to ultracategories and left ultrafunctors -- \textit{coherent} geometric morphisms are identified \textit{ultrafunctors} (cf. \cite[Example 2.2.8]{Ultracategories}).
	
	Consider the category $ \Strat_{\pi}^{\natural} $ of \defn{$ \pi $-finite stratified spaces}\footnote{Here we work with the natural stratification by the underlying poset.}.
	This is the full subcategory $ \Strat_{\pi}^{\natural} \subset \Cat $ with objects those categories $ \Pi $ with the property that every endomorphism in $ \Pi $ is an equivalence, $ \Pi $ has only finitely many objects up to equivalence, and all of the mapping spaces in $ \Pi $ are $ \pi $-finite spaces.
	In \cite{exodromy} showed that the extension to proöbjects of the functor given by $ \goesto{\Pi}{\Fun(\Pi,\Space)} $ defines a fully faithful embedding
	\begin{equation*}
		\incto{\Pro(\Strat_{\pi}^{\natural})}{\Topbc}
	\end{equation*}
	of profinite stratified spaces into bounded coherent topoi and coherent geometric morphisms.
	We identified the essential image as the category $ \Top_{\infty}^{\spec} $ of \textit{spectral} topoi -- this is our higher-categorical Hoschster Duality Theorem \cite[Theorem 10.3.1]{exodromy}.

	This embedding has a left adjoint $ \Pi_{(\infty,1)}^{\wedge} \colon \fromto{\Topbc}{\Pro(\Strat_{\pi}^{\natural})} $ given by the \textit{profinite stratified shape}.
	For a spectral topos $ \XX $, the profinite statified shape $ \Pi_{(\infty,1)}^{\wedge}(\XX) $ has the property that the materialisation $ \mat \Pi_{(\infty,1)}^{\wedge}(\XX) $ is equivalent to the category $ \Pt(\XX) $ of points of $ \XX $.
	It is thus possible to recast the profinite stratified shape and \textit{exodromy equivalence} of \cite[Theorem 11.1.7]{exodromy} in terms of ultracategories (or pseudopyknotic categories).
	In particular, for a coherent scheme $ X $, our profinite Galois category $ \Gal(X) $\cites{Barwick:galperf}[\S13]{exodromy} is naturally a pyknotic category.
	The benefit of the perspective taken in \cite{exodromy} is that the theory of profinite stratified spaces is appreciably more simple than that of pyknotic categories.
\end{exm}


\DeclareFieldFormat{labelnumberwidth}{#1}
\printbibliography[keyword=alph]
\addcontentsline{toc}{section}{References} 
\DeclareFieldFormat{labelnumberwidth}{{#1\adddot\midsentence}}
\printbibliography[heading=none, notkeyword=alph]

\end{document}